\renewcommand{\varrho}{\mu}
\author[J. Morgan]{Jeff Morgan}
\address{Jeff Morgan \hfill\break
	Department of Mathematics, 
	University of Houston, Houston, Texas 77004, USA}
\email{jmorgan@math.uh.edu}
\author[B.Q. Tang]{Bao Quoc Tang}
\address{Bao Quoc Tang \hfill\break
	Institute of Mathematics and Scientific Computing, University of Graz, 
	Heinrichstrasse 36, 8010 Graz, Austria}
\email{quoc.tang@uni-graz.at, baotangquoc@gmail.com}
\title[Reaction-diffusion systems with intermediate sum conditions]{Boundedness for reaction-diffusion systems with Lyapunov functions and intermediate sum conditions}
\newcommand{\wh}{\widehat}
\newtheorem{theorem}{Theorem}[section]
\newtheorem{definition}{Definition}[section]
\newtheorem{lemma}{Lemma}[section]
\newtheorem{proposition}{Proposition}[section]
\newtheorem{remark}{Remark}[section]
\newtheorem{example}{Example}[section]
\begin{document}
	\subjclass[2010]{35A01, 35K57, 35K58, 35Q92}
	\keywords{Reaction-diffusion systems; Classical solutions; Uniform-in-time boundedness; Intermediate sum condition; Mass dissipation}
	\begin{abstract}
		We study the uniform boundedness of solutions to reaction-diffusion systems possessing a Lyapunov-like function and satisfying an {\it intermediate sum condition}. This significantly generalizes the mass dissipation condition in the literature and thus allows the nonlinearities to have arbitrary polynomial growth. We show that two dimensional reaction-diffusion systems, with quadratic intermediate sum conditions, have global solutions which are bounded uniformly in time. In higher dimension, bounded solutions are obtained under the condition that the diffusion coefficients are {\it quasi-uniform}, i.e. they are close to each other. Applications include boundedness of solutions to chemical reaction networks with diffusion.
	\end{abstract}
%
%	\begin{center}
%		\Large{\bf Statement}
%	\end{center}
%		\vskip 1in
%		Dear Editors of the journal M3AS,
%	
%		\vskip 0.3in
%		This letter is to confirm that the present paper is not currently submitted to other journals and it will not be submitted to other journals during the reviewing process.
%		
%		\vskip 1in
%		\begin{flushright}
%			On the behalf of the authors\\
%			\medskip
%			{\it Bao Quoc Tang}
%		\end{flushright}
%	\newpage
	\maketitle
	
	\tableofcontents
	
	\section{Introduction and Main Results}	
	Let $\Omega\subset \mathbb R^n$, $n\geq 1$ be a bounded domain with smooth boundary $\partial\Omega$. In this paper, we study the global existence of classical solution to the following semilinear reaction-diffusion system
	\begin{equation}\label{e0}
	\begin{cases}
		\partial_t u_i - d_i\Delta u_i = f_i(u), &(x,t)\in \Omega\times(0,T),\\
		\nabla u_i\cdot \nu = 0, &(x,t)\in \partial\Omega\times(0,T),\\
		u_i(x,0) = u_{i,0}(x), &x\in\Omega,
	\end{cases}
	\end{equation}
	where $u = (u_1,\ldots, u_m)$, $d_i>0$ are diffusion coefficients, $\nu$ is the unit outward normal vector on $\partial\Omega$, $u_{i,0}$ are bounded, non-negative initial data, and the nonlinearities satisfy the following conditions:
	\begin{enumerate}[label=(A\theenumi),ref=A\theenumi]
		\item\label{A1} $f_i: \mathbb R^m \to \mathbb R$ is locally Lipschitz.
		\item\label{A2} For any $i=1,\ldots, m$, $f_i(u)\geq 0$ for all $u\in [0,\infty)^m$ provided $u_i = 0$.
		\item\label{A3} There exist nonnegative convex functions $h_i: [0,\infty) \to [0,\infty)$ which is continuous, and $h$ is $C^2$ on $(0,\infty)$, with $h_i'(z_i) > 0$ for some $z_i \in [0,\infty)$, and a nonnegative constant $K$ such that
		\begin{equation*}
			\sum_{i=1}^mh_i'(u)f_i(u) \leq K\left(1+\sum_{i=1}^mh_i(u)\right) \quad \text{ for all } \quad u\in (0,\infty)^m.
		\end{equation*}
		\item\label{A4} There exist an $m\times m$ lower triangle matrix $A = (a_{ij})$ with nonnegative elements and $a_{ii}>0$ for all $i=1,\ldots, m$, a constant $r\geq 1$ and a constant $C>0$ such that
		\begin{equation*}
			\sum_{j=1}^ka_{kj}f_j(u) \leq C\left(1+ \sum_{i=1}^mu_i\right)^r
		\end{equation*}
		for all $k=1,\ldots, m$ and $u\in [0,\infty)^m$.
		\item\label{A5} The nonlinearities are one-side bounded by a polynomial, i.e. there exist $C>0$ and $\varrho > 0$ such that
		\begin{equation*}
			f_i(u) \leq C\left(1+|u|^{\varrho}\right) \quad\text{ for all } \quad i=1,\ldots, m, \quad u\in [0,\infty)^m,
		\end{equation*}
		where $|u| = \sum_{j=1}^m|u_j|$.
	\end{enumerate}
	
	\medskip
	Let us comment on the assumptions \eqref{A1}--\eqref{A5}. The local Lipschitz continuity in \eqref{A1} is common in studying reaction-diffusion systems as it allows us to foremost obtain local existence. Assumption \eqref{A2} is relevant for systems arising from biology or chemistry, as it preserves the non-negativity of solutions, meaning that if the initial concentration (or density, population) is non-negative, then the solution is also non-negative as long as it exists. This assumption has a simple physical interpretation: if a concentration is zero, then it cannot be consumed in the reaction. The condition \eqref{A3} generalizes several common assumptions in the literature. Namely,
	\begin{itemize}
		\item {\it mass conservation}: when $h_i(u_i) = \alpha_i u_i$, $\alpha_i > 0$, $K=0$ and equality holds, i.e.
		\begin{equation}\label{mass_conservation}
			\sum_{i=1}^m \alpha_i f_i(u) = 0;
		\end{equation}
		\item {\it mass dissipation}: when $h_i(u_i) = \alpha_i u_i$, $\alpha_i > 0$, $K=0$, i.e.
		\begin{equation}\label{mass_dissipation}
			\sum_{i=1}^m \alpha_i f_i(u) \leq 0;
		\end{equation}
		\item {\it mass control}: when $h_i(u_i) = \alpha_i u_i$, $\alpha_i > 0$, $K>0$, i.e.
		\begin{equation}\label{mass_control}
			\sum_{i=1}^m\alpha_if_i(u) \leq K\left(1+\sum_{i=1}^m \alpha_i u_i\right);
		\end{equation}
		\item {\it entropy dissipation}: when $h_i(u_i) = u_i\log u_i - u_i+1$ and $K = 0$, i.e.
		\begin{equation}\label{entropy_inequality}
			\sum_{i=1}^m f_i(u)\log u_i \leq 0.
		\end{equation}
	\end{itemize}
	The condition \eqref{A5} indicates that the nonlinearities do not grow faster than polynomial of order $\mu$. We remark that our results in this paper will not have any restriction on the growth $\mu$. The assumption \eqref{A4} is called an {\it intermediate sum condition}, in the sense that only one of the nonlinearties is assumed to be one-side bounded by a polynomial of order $r$, while for the others we just need a good "cancellation". In practice, $r$ can be much smaller than $\mu$. To better explain that, we consider the following example which models the reversible reaction $pS_1 + qS_2 \leftrightarrows \ell S_3$, for $p, q \geq 1$,
	\begin{equation}\label{example}
	\begin{cases}
		\partial_t u_1 - d_1\Delta u_1 = f_1(u):= p(u_3^\ell - u_1^pu_2^q),\\
		\partial_t u_2 - d_2\Delta u_2 = f_2(u):= q(u_3^\ell - u_1^pu_2^q),\\
		\partial_t u_3 - d_3\Delta u_3 = f_3(u):= -\ell(u_3^\ell - u_1^pu_2^q).
	\end{cases}
	\end{equation}
	By choosing the matrix 
	\begin{equation*}
		A = \begin{pmatrix}
			1&0&0\\
			0&1&0\\
			q&p&2pq\ell^{-1}
		\end{pmatrix}
	\end{equation*}
	we can see that \eqref{A4} satisfies with $r = \ell$, while obviously $\varrho = \max\{p+q; \ell \}$ which can be much bigger than $r$ when $p$ and $q$ are large.

	\medskip
	With \eqref{A1}, \eqref{A2} and non-negative, bounded initial data, the local existence of a non-negative strong solution to \eqref{e0} on a maximal interval $(0,T_{\max})$ is classical (see e.g. \cite{Ama85}). The global existence of that local solution, on the other hand, is a challenging issue. Assuming for instance the mass dissipation \eqref{mass_dissipation}, one can easily show, using the homogeneous Neumann boundary conditions, that
	\begin{equation*}
		\frac{d}{dt}\sum_{i=1}^m\int_{\Omega}\alpha_iu_i(x,t)dx \leq 0.
	\end{equation*}
	Taking into account the non-negativity of the solution, this means that the solution is bounded in $L^\infty(0,T;L^1(\Omega))$ uniformly in $0<T<T_{\max}$. This is far from enough to obtain global existence. In fact, it was shown by a famous counterexample in \cite{PS00} that there exist systems satisfying \eqref{A1}--\eqref{A2} and \eqref{mass_dissipation} whose solutions blow up in $L^\infty$-norm in finite time. The global existence of a strong solution to \eqref{e0} with more structural conditions on the nonlinearities is therefore an interesting problem, and has been studied extensively in the literature. Let us first review some existing results in the literature, and from that highlight the novelty of our paper. Note that in the following, \eqref{A1} and \eqref{A2} are always assumed.
	\begin{itemize}
		\item In \cite{GV10}, assuming \eqref{mass_conservation} and \eqref{entropy_inequality}, it was shown that strong solutions exist globally if $n=1$ and $\mu = 3$ or $n=2$ and $\mu = 2$. The proof utilized the famous De Giorgi's method. This result was later reproved in \cite{Tan18a} by a modified Gagliardo-Nirenberg inequality.
		\item Still under \eqref{mass_conservation} and \eqref{entropy_inequality}, \cite{CV09} proved global existence for all $n\geq 1$ with strictly sub-quadratic growth, i.e. $\mu < 2$.
		\item By assuming only \eqref{mass_conservation}, \cite{CDF14} showed global existence of bounded solutions when $n=2$ and $\mu = 2$. Moreover, when $n\geq 2$ and $\mu \geq 2$, global strong solutions are proved under the {\it quasi-uniform diffusion} condition, i.e. the diffusion coefficients are close enough to each other. Similar results were also proved in \cite{FLS16}, and recently \cite{CMT}.
		\item By relaxing \eqref{mass_conservation} to \eqref{mass_dissipation}, \cite{PSY19} proved global existence when $n=2$ and $\mu = 2$. Therein, the solution is also proved to be bounded uniformly in time.
		\item In the close-to-equilibrium regime, it was shown in \cite{CC17} that if the initial data is close to equilibrium (in $L^2$-norm) then one can obtain global existence with $\mu = 2$ up to dimension $n=4$. This was later improved in \cite{Tan18} with the condition $\mu = 1+4/n$.
		\item The case $\mu = 2$ in higher dimensions had remained an open question until recently when it was settled in three different works \cite{CGV19, Sou18, FMT19}. The first work assumed \eqref{mass_conservation} and \eqref{entropy_inequality}, while the second relaxed \eqref{mass_conservation} to \eqref{mass_dissipation} but still needs \eqref{entropy_inequality}. The most general result is contained in the last work where the authors only assumed \eqref{mass_control}. The uniform-in-time bound has been shown recently in \cite{FMT19a}.  It's worth to mention the almost unnoticed work \cite{Kan90} where it considered $\Omega = \mathbb R^n$.
		\item Concerning a work assuming the general assumption \eqref{A3}, we refer to the works \cite{Mor90} and \cite{MW04}, in which the later showed the global existence with \eqref{mass_control} and quadratic intermediate sum condition but without uniform-in-time bounds.
		\item There are also number of works dealing with weaker notions of solutions. For instance it was shown in \cite{Pie03} under \eqref{mass_control} that if the nonlinearities belong to $L^1(\Omega\times(0,T))$ for any $T$, then one gets global weak solutions. By using a duality method, \cite{DFPV07} showed with \eqref{mass_control} global existence of weak solutions in all dimensions assuming $\mu = 2$. An even weaker notion called {\it renormalized solutions} was shown global in \cite{Fis15} assuming \eqref{entropy_inequality}. The interested reader is referred to the extensive survey \cite{Pie10} for more details.
	\end{itemize}
	It can be seen from the aforementioned works that the global existence of strong solutions with super-quadratic nonlinearities, i.e. $\mu > 2$, is much less studied except in \cite{CDF14} and \cite{FLS16}, where the diffusion coefficients are assumed to be quasi-uniform. That is the main motivation of our paper. More precisely, by utilizing the intermediate sum condition \eqref{A3}, we allow arbitrary polynomial growth of the nonlinearities.
	
	\medskip
	The first main result of this paper is the following.
	\begin{theorem}[Quadratic intermediate sums in two dimensions]\label{thm2}
			Assume that conditions \eqref{A1}--\eqref{A5} hold and let $n=2$ and $r = 2$ in \eqref{A4}. Then for any bounded, nonnegative initial data, \eqref{e0} has a unique nonnegative, global strong solution. 
			
			Moreover, if $K=0$ in \eqref{A3}, then the solution is bounded uniformly in time, i.e.
			\begin{equation}\label{ee}
			\sup_{t\geq 0}\|u_i(t)\|_{L^\infty(\Omega)} < +\infty \quad \text{ for all } \quad i=1,\ldots, m.
			\end{equation}
	\end{theorem}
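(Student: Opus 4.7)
The plan is to establish an a priori $L^\infty$-bound for $u$ on any finite subinterval of the maximal existence interval, which by the standard blow-up alternative for (A1)--(A2) systems forces global existence; a refinement then makes the estimates time-independent when $K=0$. The argument splits into three stages: (i) an $L^1$-type bound from the Lyapunov assumption (A3); (ii) an $L^p$-bootstrap exploiting the triangular structure of (A4) with $r=2$ together with the two-dimensional Gagliardo--Nirenberg inequality; and (iii) an upgrade from $L^p$ to $L^\infty$ via parabolic regularity.

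For (i), I would multiply the $i$-th equation by $h_i'(u_i)$, integrate over $\Omega$, and sum over $i$. Convexity of $h_i$ makes $d_i\int_\Omega h_i''(u_i)|\nabla u_i|^2\,dx$ nonnegative, so (A3) yields
\begin{equation*}
\frac{d}{dt}\sum_{i=1}^m \int_\Omega h_i(u_i)\,dx \;\leq\; K\Bigl(|\Omega| + \sum_{i=1}^m\int_\Omega h_i(u_i)\,dx\Bigr),
\end{equation*}
and Gronwall produces a bound which is uniform in $t$ precisely when $K=0$. Since $h_i$ is convex with $h_i'(z_i)>0$ at some $z_i$, we have $h_i(u_i)\geq h_i'(z_i) u_i - c_i$, so each $u_i$ is controlled in $L^\infty(0,T;L^1(\Omega))$.

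For (ii), set $W_k := \sum_{j=1}^k a_{kj}u_j$ and $V_k := \sum_{j=1}^k a_{kj}d_j u_j$. Summing the first $k$ equations with weights $a_{kj}$ and invoking (A4),
\begin{equation*}
\partial_t W_k - \Delta V_k \;\leq\; C\bigl(1+|u|\bigr)^2, \qquad \nabla W_k\cdot\nu\big|_{\partial\Omega}=0.
\end{equation*}
Since $d_{\min}W_k \leq V_k \leq d_{\max}W_k$, testing by $W_k^{p-1}$ produces a dissipation of the form $c_p\int_\Omega|\nabla W_k^{p/2}|^2\,dx$ that, through the 2D Gagliardo--Nirenberg inequality $\|v\|_{L^4}^4 \leq C\|v\|_{L^2}^2\|v\|_{H^1}^2$, absorbs the quadratic forcing $\int(1+|u|)^2 W_k^{p-1}\,dx$ once $u_1,\ldots,u_{k-1}$ have been placed in sufficiently good $L^p$-spaces. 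Because $A$ is lower triangular with $a_{kk}>0$, this yields inductively $u_k \in L^\infty(0,T;L^p(\Omega))\cap L^2(0,T;H^1(\Omega))$ for every $k$ and every $p<\infty$.

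For (iii), once $|u|\in L^p(\Omega\times(0,T))$ for arbitrarily large $p$, (A5) gives $f_i \in L^p(\Omega\times(0,T))$ for $p$ as large as needed, and standard heat-semigroup or Moser-iteration arguments produce $u_i\in L^\infty(\Omega\times(0,T))$. For the uniform-in-time statement ($K=0$) one repeats the argument on unit time-windows $[t,t+1]$: the $L^1$ and $L^p$ bounds are already time-uniform, and the smoothing of the heat semigroup over a bounded interval converts them into a time-uniform $L^\infty$-bound. The delicate step is (ii): the summed equation has a non-constant, merely bounded diffusion coefficient, so classical Pierre $L^p$-duality does not apply directly, and it is precisely the balance between the quadratic forcing $r=2$ in (A4) and the Gagliardo--Nirenberg embedding in dimension $n=2$ that closes the estimate, which is why the result is restricted to two dimensions in the absence of the quasi-uniform diffusion assumption.
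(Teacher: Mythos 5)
Your stages (i) and (iii), and the idea of re-running the estimates on unit time-windows for the uniform-in-time bound, are in line with the paper. The gap is in stage (ii), which is the heart of the matter. From $\partial_t W_k - \Delta V_k \leq C(1+|u|)^2$ you claim that testing with $W_k^{p-1}$ ``produces a dissipation of the form $c_p\int_\Omega |\nabla W_k^{p/2}|^2\,dx$.'' It does not. Writing $V_k = M_k W_k$ with $d_{\min}\leq M_k\leq d_{\max}$, the term you must control is
\begin{equation*}
\int_\Omega \nabla V_k\cdot\nabla\bigl(W_k^{p-1}\bigr)\,dx
=(p-1)\int_\Omega W_k^{p-2}\,\Bigl(M_k\,\nabla W_k+W_k\,\nabla M_k\Bigr)\cdot\nabla W_k\,dx ,
\end{equation*}
and the contribution of $W_k\nabla M_k\cdot\nabla W_k$ has no sign and no a priori bound: the pointwise comparability $d_{\min}W_k\leq V_k\leq d_{\max}W_k$ says nothing about the gradients, since $V_k$ and $W_k$ are \emph{different} linear combinations of the $u_j$ as soon as the $d_j$ differ. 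This is exactly the classical obstruction for unequal diffusions, and it is why the paper never tests with powers of the solution: both the initial $L^{2+\varepsilon}(Q_T)$ bound (applied to $Z=\sum_i h_i(u_i)$, which only satisfies $\partial_t Z\leq\Delta(MZ)+K(1+Z)$ with $A\leq M\leq B$) and the inductive step over $k$ are carried out by an $L^p$-duality argument: one tests against the nonnegative solution $\phi$ of a backward dual problem with \emph{constant} coefficient ($\frac{A+B}{2}$, respectively $d_k$), and uses Lamberton's maximal regularity $\|\Delta\phi\|_{L^{p'}}\leq C\|\theta\|_{L^{p'}}$ plus the improved duality constant of Ca\~nizo--Desvillettes--Fellner (which for $p'$ slightly below $2$ holds for arbitrary diffusion coefficients) to absorb the commutator $(M-m)\Delta\phi$. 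Your energy route replaces this by an estimate that fails at the first integration by parts, and it is not repaired by the lower-triangular structure of $A$: using the equations for $u_j$, $j<k$, inside an energy identity for $u_k$ produces cross terms $\int\nabla u_j\cdot\nabla(u_k^{p-1})$ with the same sign problem, whereas in the duality argument these terms become $\int u_j(\partial_t\phi-d_j\Delta\phi)$ and are controlled by the maximal regularity of $\phi$.

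A second, smaller issue: you say the quadratic forcing is absorbed ``once $u_1,\ldots,u_{k-1}$ have been placed in sufficiently good $L^p$-spaces,'' but the right-hand side of (A4) is $(1+\sum_{i=1}^m u_i)^2$ and involves \emph{all} components, including $u_k,\ldots,u_m$, for which at that point you only have $L^\infty(0,T;L^1(\Omega))$; with quadratic forcing and only $L^1$ control of those components the two-dimensional Gagliardo--Nirenberg inequality does not close the estimate. The paper's scheme avoids this by first proving $\|u_i\|_{L^{p_0}(Q_T)}\leq C_T$ for some $p_0>2$ for \emph{every} $i$ (the improved duality lemma), and then raising the integrability of all components simultaneously, level by level $p_N\mapsto p_{N+1}=\frac{(n+2)p_N/r}{\,n+2-2p_N/r\,}$, treating $u_1$ first and then $u_2,\ldots,u_m$ by the $d_k$-duality step at each level. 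You would need an analogous simultaneous starting bound above $L^2(Q_T)$, and it is not available by energy methods for the reason above. Finally, for the uniform-in-time statement, note that the finite-time duality constants are $C_T$ with $T$-dependence, so one cannot simply ``reuse'' them; the paper localizes to cylinders $Q_{\tau,\tau+2}$ with cutoffs $\varphi_\tau$ and an elementary sequence lemma to get $\tau$-independent constants — your plan to work on unit windows is the right instinct, but it rests on the unproven step (ii).
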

	\begin{remark}[Improvements or generalizations]
	\hfill
	\begin{itemize}
		\item The assumption \eqref{A4} can be generalized to
				\begin{equation*}
				\sum_{j=1}^ka_{kj}g_j'(u_j)f_j(u) \leq C\left(1+\sum_{i=1}^mg_i(u_i)\right)^r, \quad \text{ for all } \quad k = 1,\ldots, m,
				\end{equation*}
				where $g_i: [0,\infty) \to [0,\infty)$ are $C^2$ nonnegative, convex functions satisfying $g_i'(z_i)> 0$ for some $z_i\in [0,\infty)$, provided there exists $M>0$ so that $g_i(z) \leq M(h_i(z) + 1)$ for $z$ sufficiently large, for all $i=1,\ldots, m$.
		\item The condition $r = 2$ can be slightly improved to $r = 2 + \varepsilon$ for sufficiently small $\varepsilon>0$. 
		\item In fact, to obtain the uniform bound \eqref{ee} as long as a uniform bound of the functions $h_i$ in $L^1$-norm is available, i.e.
		\begin{equation*}
			\sup_{i=1,\ldots, m}\sup_{t\geq 0}\|h_i(u_i)(t)\|_{L^1(\Omega)} < +\infty.
		\end{equation*}
		This bound follows straightforwardly from \eqref{A3} with $K = 0$ (see Lemma \ref{duality-lemma-cylinder}).
	\end{itemize}
	\end{remark}
	In Theorem \ref{thm2}, observe that the growth $\varrho$ in \eqref{A5} can be arbitrary. Which means that the nonlinearities can have arbitrarily high polynomial growth, as long as their intermediate sums (in the sense of \eqref{A4}) are bounded from the right by a quadratic polynomial. For instance, the system \eqref{example} with $\ell = 2$ satisfies the assumptions of Theorem \ref{thm2} and it therefore possesses a global strong solution, bounded uniformly in time, in two dimensions. 
	
	\medskip
	It's worthwhile to mention that the uniform-in-time bound of the solution is also of importance. This issue was usually left untouched in the literature, except for e.g. \cite{PSY19} or our recent work \cite{CMT}. In some situations arising from chemical reaction networks with boundary equilibria, the uniform $L^\infty$-bound plays a very important role. See more details in Examples \ref{ex1} and \ref{ex2}.
	
	\medskip
	Let us sketch the proof of Theorem \ref{thm2}, which can be roughly divided into several steps.
	\begin{description}
		\item[{\normalfont {\it Step 1}}] First, thanks to \eqref{A3}, we show that $u_i\in L^{p_0}(\Omega\times(0,T))$ for some $p_0>2$. This is proved thanks to an improved duality argument in \cite{CDF14}.
		\item[{\normalfont {\it Step 2}}] From this and \eqref{A4}, it can be proved that $u_1\in L^{q}(\Omega\times(0,T))$ for all $q<p_1$ where $p_1 = \frac{2p_0}{4-p_0}$.
		\item[{\normalfont {\it Step 3}}] This step is crucial where we prove that, for any $k=2,\ldots, m$, if $u_i \in L^{q}(\Omega\times(0,T))$ for all $q<p_1$, and all $i=1,\ldots, k-1$, then $u_k \in L^q(\Omega\times(0,T))$ for all $q<p_1$.
		\item[{\normalfont {\it Step 4}}] We then construct a sequence $\{p_N\}_{N\geq 0}$ with $p_{N+1} = \frac{2p_N}{4-p_N}$ for $p_N < 4$ and $p_{N+1}=\infty$ for $p_N\geq 4$, such that $u_i \in L^q(\Omega\times(0,T))$ for all $q<p_N$. 
		\item[{\normalfont {\it Step 5}}] Since $p_0>2$, the sequence $\{p_N\}$ is strictly increasing and there exists $p_{N_0}\geq 4$ which makes $p_{N_0} = +\infty$. Therefore, $u_i\in L^q(\Omega\times(0,T))$ for all $q<+\infty$. Finally, using the polynomial growth \eqref{A5}, one gets $u_i\in L^\infty(\Omega\times(0,T))$, hence the global existence of strong solutions.
		\item[{\normalfont {\it Step 6}}] To show the uniform boundedness in time, we repeat the above arguments but now in each cylinder $\Omega\times(\tau,\tau+1)$, $\tau \in \mathbb N$, and eventually show that $\|u_i\|_{L^\infty(\Omega\times(\tau,\tau+1))} \leq C$ where the constant $C$ is independent of $\tau$.
	\end{description}
	
	\medskip
	When $n\geq 3$ and $r>2$, the global existence of \eqref{e0} is largely open, except for the case when $h_i(u_i) = u_i$ and $\varrho = 2$, see e.g. \cite{CGV19,FMT19,Sou18}. In the next main result, we show that, if the diffusion coefficients are {\it quasi-uniform}, meaning that if they are close to each other, then one can obtain global strong solutions to \eqref{e0}.
	\begin{theorem}[Global existence and boundedness of strong solutions]\label{thm1}
		Assume that conditions \eqref{A1}--\eqref{A5} hold. Define 
		\begin{equation*}
			A = \min\{d_i: i=1,\ldots, m\} \quad \text{ and } \quad B = \max\{d_i: i=1,\ldots, m\}.
		\end{equation*}
		If
		\begin{equation}\label{quasi-uniform}
			C_{\frac{A+B}{2}, p'}<\frac{2}{B-A}
		\end{equation}
		where $p'>1$ such that
		\begin{equation*}
			p = \frac{p'}{p'-1} > \frac{n+2}{2}(r-1),
		\end{equation*}
		and $C_{\frac{A+B}{2},p'}$ is the constant defined in Lemma \ref{maximal-regularity} (a), then \eqref{e0} has a unique global nonnegative, bounded strong solution for any nonnegative, bounded initial data $u_{i0}$.

		\medskip		
		Moreover, if $K=0$ in \eqref{A3}, then the solution is bounded uniformly in time, i.e.
		\begin{equation*}
			\sup_{t\geq 0}\|u_i(t)\|_{L^\infty(\Omega)} < +\infty \quad \text{ for all } \quad i=1,\ldots, m.
		\end{equation*}
	\end{theorem}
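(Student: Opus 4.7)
The plan closely follows the six-step outline already given for Theorem \ref{thm2}: the new ingredient is that the initial $L^p$-estimate (Step 1 of that sketch) is obtained from a duality argument driven by the quasi-uniform hypothesis \eqref{quasi-uniform} rather than from the two-dimensional structure. Local existence, nonnegativity, and the $L^1$-control $\sup_{t<T}\sum_i\|h_i(u_i)(t)\|_{L^1(\Omega)}\le C$ (coming from \eqref{A3} and Lemma \ref{duality-lemma-cylinder}) are taken as inputs.

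First I would establish the initial $L^p$-estimate with the required exponent $p>\tfrac{n+2}{2}(r-1)$. Setting $\bar d=(A+B)/2$ and, for each $k=1,\dots,m$, $\tilde w_k=\sum_{j=1}^k a_{kj}u_j$, the equations yield
\[
\partial_t\tilde w_k-\bar d\,\Delta\tilde w_k=\Delta\zeta_k+\sum_{j=1}^k a_{kj}f_j,\qquad \zeta_k=\sum_{j=1}^k a_{kj}(d_j-\bar d)u_j,
\]
with $|\zeta_k|\le\tfrac{B-A}{2}\tilde w_k$ pointwise (using $a_{kj}\ge 0$ and $|d_j-\bar d|\le\tfrac{B-A}{2}$), and $\sum_{j=1}^k a_{kj}f_j\le C(1+|u|)^r$ by \eqref{A4}. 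Pairing $\tilde w_k$ with the solution $\phi$ of the dual backward problem $-\partial_t\phi-\bar d\Delta\phi=\psi$, $\phi(\cdot,T)=0$, integrating by parts to move $\Delta$ from $\zeta_k$ onto $\phi$, and invoking the maximal regularity estimate of Lemma \ref{maximal-regularity}(a) gives
\[
\|\tilde w_k\|_{L^p(\Omega\times(0,T))}\le C_{\bar d,p'}\tfrac{B-A}{2}\|\tilde w_k\|_{L^p(\Omega\times(0,T))}+\text{(data/source terms controlled by }h_i\text{ and }u_{i,0}).
\]
The quasi-uniform inequality \eqref{quasi-uniform} makes the leading coefficient strictly smaller than $1$, so a Neumann-series absorption closes the estimate; since $a_{kk}>0$, recursing $k=1,\dots,m$ delivers $\|u_i\|_{L^p(\Omega\times(0,T))}\le C$ for every $i$.

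Next I would bootstrap this $L^p$-bound up to $L^\infty$ following Steps 2--5 of the Theorem \ref{thm2} sketch. The condition $p>\tfrac{n+2}{2}(r-1)$ is exactly what makes the recursion $\tfrac{1}{p_{N+1}}=\tfrac{r}{p_N}-\tfrac{2}{n+2}$ strictly decreasing in $1/p_N$, so the sequence of integrability exponents is strictly increasing and reaches $+\infty$ in finitely many steps. At each level $N$, proceeding in the order dictated by the lower triangular matrix \eqref{A4}, once $u_1,\dots,u_{k-1}\in L^{p_{N+1}}$ the pointwise upper bound $f_k\le \bigl(C(1+|u|)^r-\sum_{j<k}a_{kj}f_j\bigr)/a_{kk}$ together with the parabolic equation for $\tilde w_k$ and the maximal regularity lemma promotes $u_k$ from $L^{p_N}$ to $L^{p_{N+1}}$. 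A final application using \eqref{A5} yields $u_i\in L^\infty(\Omega\times(0,T))$, hence $T_{\max}=+\infty$.

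For the uniform-in-time bound when $K=0$, I would rerun the whole argument on each cylinder $\Omega\times(\tau,\tau+1)$, $\tau\in\mathbb N$. Since $K=0$ in \eqref{A3} yields a bound on $\sup_t\sum_i\|h_i(u_i)(t)\|_{L^1(\Omega)}$ independent of $\tau$, and since $C_{\bar d,p'}$ depends only on $\bar d$, $p'$, $\Omega$, every constant appearing in the duality step and in the subsequent bootstrap is uniform in $\tau$, giving the stated bound. The principal technical obstacle is the duality step itself: one must carefully identify the "data" terms that survive the absorption, and couple the Neumann-series trick with the $L^1$-Lyapunov control coming from \eqref{A3} so that the resulting $L^p$-estimate is uniform both in $T$ and, in the $K=0$ case, in $\tau$. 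Once this step is executed, the remaining iteration to $L^\infty$ is a by-now standard consequence of parabolic regularity and the choice of $p'$.
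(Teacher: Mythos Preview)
Your Step~1 has a genuine gap. You apply the quasi-uniform duality argument to the partial sums $\tilde w_k=\sum_{j\le k}a_{kj}u_j$, which via \eqref{A4} produces the source term $C(1+|u|)^r$. When you test against the dual function $\phi$, this contributes $\int\phi(1+|u|)^r$, and you claim this is ``controlled by $h_i$ and $u_{i,0}$''. But at this stage the only a~priori information is $u\in L^\infty_tL^1_x$ (from the Lyapunov bound), and the maximal regularity of $\phi$ gives $\phi\in L^s$ only for $s<\frac{(n+2)p'}{n+2-2p'}$. Controlling $\int\phi(1+|u|)^r$ would then require $(1+|u|)^r\in L^{s'}$, i.e.\ $u\in L^{rs'}$ for some $rs'>r$; this is not available before the very estimate you are trying to prove. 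Even interpolating $L^{rs'}$ between $L^1$ and $L^p$ yields a factor $\|u\|_{L^p}^{r\alpha}$ with $r\alpha$ in general not below~$1$, so the Neumann-series absorption does not close.

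The paper avoids this by applying the duality argument not to $\tilde w_k$ but to the Lyapunov quantity $Z=\sum_i h_i(u_i)$. The convexity of $h_i$ and assumption \eqref{A3} yield the \emph{linear} differential inequality $\partial_t Z\le\Delta(MZ)+K(1+Z)$ with $A\le M\le B$. In the duality this produces a source term $K\int\phi(1+Z)$, which after interpolation with the $L^\infty_tL^1_x$ bound contributes only $\|Z\|_{L^p}^\alpha$ with $\alpha<1$, and Young's inequality closes the absorption under \eqref{quasi-uniform}. One then deduces $\|u_i\|_{L^p(Q_T)}\le C_T$ from $h_i(z)\ge h_i'(z_i)z-C$. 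The intermediate sum condition \eqref{A4} enters only \emph{after} this initial $L^p$ estimate, in the bootstrap (your Steps~2--5, which match the paper's Proposition~\ref{bootstrap}). In short: the roles of \eqref{A3} and \eqref{A4} are sequential, not interchangeable, and your Step~1 conflates them.
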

	It is again remarked that the condition on the closeness of the diffusion coefficients \eqref{quasi-uniform} depends only on $r$ in \eqref{A4} and {\it independent of the polynomial growth} $\varrho$ in \eqref{A5}. This greatly improves related results in e.g. \cite{CDF14} or \cite{FLS16}.

	\begin{remark}
		In the recent work \cite{CMT}, global existence and uniform-in-time bounds were also obtained for \eqref{e0} under mass dissipation condition \eqref{mass_dissipation} and quasi-uniform diffusion coefficients. We remark that the latter condition imposed in \cite{CMT} {\it depends on the growth $\varrho$} of the nonlinearities, and therefore is much less general than \eqref{quasi-uniform}.
	\end{remark}	
	
	\medskip
	{\bf The rest of this paper is organized as follows}: For convenience, we split the proofs of Theorems \ref{thm2} and \ref{thm1} into two parts: global existence and uniform-in-time bounds, which will be proved in Sections \ref{global-existence} and \ref{uniform-bounds} respectively. To highlight the relevance of our results, we give some applications of our results to models arising from chemical reactions in Section \ref{applications}.
	
	\medskip
	{\bf Notation.} In this paper, we will use the following notation.
	\begin{itemize}
		\item We denote by $Q_{\tau,T} = \Omega\times(\tau,T)$. When $\tau = 0$, we write simply $Q_T = Q_{0,T}$. For any $1\leq p \leq \infty$, $L^p(Q_{\tau,T})$ stands for $L^p(\tau,T;L^p(\Omega))$.
		\item For any $T>0$, we write $C_T$ for a generic constant depending continuously on $T$, which can be different from line to line or even in the same line. More importantly, $C_T$ is defined for all $0<T<\infty$.
		\item For any $1<p\leq \infty$ we write
		\begin{equation*}
			\|u\|_{L^{p-}(Q_T)} \leq C_T \quad \text{ if } \quad \|u\|_{L^q(Q_T)} \leq C_{q,T} \text{ for all }1\leq q<p.
		\end{equation*}
		Here we write $C_{q,T}$ to indicate that the constant depends on $q$, and might blow up to infinity when $q \to p$.
	\end{itemize}
	\section{Global existence}\label{global-existence}
	\begin{definition}[Classical (or strong) solutions]
	 	Let $0<T\leq \infty$. A classical (or strong) solution to \eqref{e0} on $(0,T)$ is a vector of concentrations $u = (u_1, \ldots, u_m)$ satisfying for all $i=1,\ldots, m$, $u_i\in C([0,T); L^p(\Omega))\cap L^\infty(\Omega\times(0,\tau)) \cap C^2((0,T)\times\overline{\Omega})$ for all $p>n$ and all $0<\tau<T$, and $u$ solves \eqref{e0} pointwise.
	\end{definition}
	\begin{theorem}[Local existence of strong solutions]
		Assuming \eqref{A1}--\eqref{A2}. For any bounded, nonnegative initial data, \eqref{e0} possesses a local nonnegative strong solution on a maximal interval $[0,T_{\max})$. Moreover,
		\begin{equation}\label{blowup_criterion}
			\text{ if } \limsup_{t\uparrow T_{\max}}\|u_i(t)\|_{L^\infty(\Omega)}< \infty \text{ for all } i=1,\ldots, m \quad \text{ then } \quad T_{\max} = +\infty.
		\end{equation}
	\end{theorem}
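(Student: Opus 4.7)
This is a standard semilinear parabolic local-existence statement, and I would either cite Amann's theory \cite{Ama85} directly or reconstruct the solution from scratch in three steps: a contraction-mapping argument for local existence, a quasi-positivity argument based on \eqref{A2} for nonnegativity, and a continuation argument for the blow-up alternative \eqref{blowup_criterion}.

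For the local existence step, I would write \eqref{e0} in mild form using the analytic semigroup $(S_i(t))_{t\geq 0}$ generated by $d_i \Delta$ on $L^\infty(\Omega)$ with homogeneous Neumann boundary conditions,
\begin{equation*}
u_i(t) = S_i(t) u_{i,0} + \int_0^t S_i(t-s) f_i(u(s))\, ds, \qquad i = 1,\ldots, m.
\end{equation*}
Setting $M_0 := \max_i \|u_{i,0}\|_{L^\infty(\Omega)}$ and letting $L = L(M_0)$ denote the Lipschitz constant provided by \eqref{A1} on the closed ball of radius $2M_0$, the right-hand side defines a contraction on the closed ball of radius $2M_0$ in $C([0,T_0]; L^\infty(\Omega))^m$ provided $T_0$ is chosen small enough in terms of $M_0$ and $L$. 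Banach's fixed point theorem yields a unique mild solution on $[0,T_0]$, and parabolic bootstrapping ($L^p$-maximal regularity followed by Schauder estimates) upgrades it to a classical solution lying in $C^2((0,T_0)\times\overline{\Omega})$ and satisfying the Neumann condition pointwise.

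To secure nonnegativity I would instead run the construction with $f_i$ replaced by the truncated nonlinearity $\widetilde f_i(u) := f_i(u_1^+,\ldots, u_{i-1}^+, u_i, u_{i+1}^+,\ldots, u_m^+)$, which agrees with $f_i$ on $[0,\infty)^m$ and still satisfies $\widetilde f_i(u)|_{u_i = 0}\geq 0$ for every $u\in\mathbb R^m$ by \eqref{A2}. Testing the $i$-th modified equation against $-u_i^-$, integrating over $\Omega$, discarding the boundary term via Neumann, and using the Lipschitz property of $\widetilde f_i$ in its $i$-th slot (together with the sign of $\widetilde f_i$ at $u_i=0$) delivers
\begin{equation*}
\frac{1}{2}\frac{d}{dt}\|u_i^-(t)\|_{L^2(\Omega)}^2 + d_i \|\nabla u_i^-(t)\|_{L^2(\Omega)}^2 \leq L \|u_i^-(t)\|_{L^2(\Omega)}^2 .
\end{equation*}
Since $u_{i,0}^- = 0$, Gronwall's inequality forces $u_i^-\equiv 0$ on $[0,T_0]$, so $u\geq 0$ componentwise, $\widetilde f_i(u) = f_i(u)$, and $u$ solves the original system.

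Finally, extension to a maximal interval $[0,T_{\max})$ is standard: one iteratively restarts the fixed-point argument from $u(\tau)$ for $\tau<T_{\max}$, and because the existence time delivered above depends continuously and monotonically only on $\max_i \|u_i(\tau)\|_{L^\infty(\Omega)}$, a finite uniform bound $\limsup_{t\uparrow T_{\max}}\|u_i(t)\|_{L^\infty(\Omega)} <\infty$ for each $i$ would permit extension past $T_{\max}$, contradicting maximality unless $T_{\max}=+\infty$. This is precisely \eqref{blowup_criterion}. No single step is genuinely hard; the only ingredient beyond the scalar-equation theory is the quasi-positivity closure, which is exactly what \eqref{A2} is designed to support, so I would expect the main obstacle to be purely notational bookkeeping rather than any analytic difficulty.
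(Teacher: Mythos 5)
Your proposal is correct and is precisely the standard argument the paper has in mind: the paper gives no proof of this theorem, treating local existence, nonnegativity and the blow-up alternative as classical with a citation to Amann \cite{Ama85}, and your contraction-mapping construction, quasi-positivity truncation, and continuation argument are exactly the usual ingredients behind that citation. The only technical point worth flagging is that the Neumann heat semigroup is analytic but not strongly continuous on $L^\infty(\Omega)$, so the fixed-point step is normally phrased in $C(\overline{\Omega})$ (or in $L^p(\Omega)$ with $p>n$); this is cosmetic and does not affect your argument.
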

	\begin{lemma}[Maximal regularity]\label{maximal-regularity}
		Let $0\leq \tau < T$, $m>0$ and $1<p<\infty$. Let $0\leq \theta \in L^p(Q_{\tau,T})$ and let $\phi$ be the unique nonnegative solution to
		\begin{equation}\label{dual-problem}
			\begin{cases}
				\partial_t \phi + m\Delta \phi = -\theta, &(x,t)\in Q_{\tau,T},\\
				\nabla \phi \cdot \nu = 0, &(x,t)\in \partial\Omega\times(\tau,T),\\
				\phi(x,T) = 0, &x\in\Omega.
			\end{cases}
		\end{equation}
		We have the following:
		\begin{itemize}
		\item[(a)] There exists a constant $C_{m,p}$ depending on $m$ and $p$, but {\normalfont independent of $\tau, T$} such that
		\begin{equation*}
			\|\Delta \phi\|_{L^p(Q_{\tau,T})} \leq C_{m,p}\|\theta\|_{L^p(Q_{\tau,T})}.
		\end{equation*}
		\item[(b)] 
		\begin{itemize}	
			\item[(i)] If $p\leq \frac{n+2}{2}$ then 
			\begin{equation*}
				\|\phi\|_{L^{p^*-}(Q_{\tau,T})} \leq C_{m,p,T-\tau}\|\theta\|_{L^p(Q_{\tau,T})} \quad \text{ with } \quad p^* = \frac{(n+2)p}{n+2 - 2p}.
			\end{equation*}
			\item[(ii)] If $p > \frac{n+2}{2}$ then 
			\begin{equation*}
				\|\phi\|_{L^\infty(Q_{\tau,T})} \leq C_{m,p,T-\tau}\|\theta\|_{L^p(Q_{\tau,T})}.
			\end{equation*}
		\end{itemize}
		\end{itemize}
	\end{lemma}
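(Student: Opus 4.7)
The plan is to reduce Lemma \ref{maximal-regularity} to classical parabolic theory on the cylinder by a time reversal. Setting $\psi(x,s) := \phi(x,T-s)$ and $\tilde\theta(x,s) := \theta(x,T-s)$, the backward problem \eqref{dual-problem} becomes the forward inhomogeneous Neumann heat equation
\[
\partial_s \psi - m\Delta \psi = \tilde\theta, \qquad \psi(\cdot,0) = 0, \qquad \nabla\psi\cdot\nu = 0,
\]
on $\Omega\times(0,T-\tau)$; all $L^p$-norms are preserved under $s = T-t$. I would then invoke the $L^p$-maximal regularity for the Neumann heat semigroup (for instance in the form of Hieber--Pr\"uss or Lamberton), which yields $\|\partial_s\psi\|_{L^p} + \|\Delta\psi\|_{L^p} \leq C\,\|\tilde\theta\|_{L^p}$.

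The delicate point in (a) is the uniformity of $C$ in the length $T-\tau$. I would exploit that the estimate is only on $\Delta\psi$ and that the kernel of the Neumann Laplacian consists of constants: decompose $\psi = \bar\psi + \psi^{\perp}$, where $\bar\psi(s) = |\Omega|^{-1}\int_{\Omega}\psi(\cdot,s)\,dx$ is the spatial mean and $\psi^\perp$ is mean-zero. Then $\Delta\psi = \Delta\psi^{\perp}$, and $\psi^\perp$ solves the same heat equation driven by the mean-zero part of $\tilde\theta$, on the invariant subspace of mean-zero $L^p(\Omega)$-functions. On that subspace the Neumann heat semigroup is exponentially stable, with decay rate bounded below by the first nontrivial Neumann eigenvalue $\lambda_1 > 0$. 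Extending $\tilde\theta$ by zero to all of $(0,\infty)$ and representing $\psi^{\perp}$ via Duhamel's formula, one can apply the maximal regularity of the generator $-m\Delta$ on the stable subspace to obtain an estimate whose constant depends only on $m, p, \Omega$, but is independent of the length of the time interval. This gives (a).

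For part (b), once (a) is established, the same extension together with a standard lower-order bound (now with a constant allowed to depend on $T-\tau$, since the mean part $\bar\psi$ grows at most linearly in time) gives $\psi \in W^{2,1}_p(\Omega\times(0,T-\tau))$ with $\|\psi\|_{W^{2,1}_p} \leq C_{m,p,T-\tau}\|\tilde\theta\|_{L^p}$. Then the parabolic Sobolev embedding (Ladyzhenskaya--Solonnikov--Ural'tseva, Chapter II, Lemma 3.3) applies: when $p < (n+2)/2$, one has $W^{2,1}_p \hookrightarrow L^{p^*}$ with $p^* = (n+2)p/(n+2-2p)$; when $p = (n+2)/2$, the embedding is into $L^q$ for every $q < \infty$, which together with the sub-critical case yields the $L^{p^*-}$ statement; and when $p > (n+2)/2$, a parabolic Morrey-type embedding gives $W^{2,1}_p \hookrightarrow L^\infty$, establishing (b)(ii).

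The main obstacle is unambiguously the $\tau,T$-independence of the constant in (a), which is what the duality arguments in the later sections rely on. It genuinely uses that only $\Delta\phi$ is estimated, so that the constant modes drop out and one can work on the exponentially stable mean-zero subspace. The remainder of the proof is a routine application of time reversal, classical $L^p$-maximal regularity, and parabolic embedding.
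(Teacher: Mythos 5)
Your proof is correct and follows the same skeleton as the paper: time reversal to the forward Neumann heat equation with zero initial data, $L^p$ maximal regularity for part (a), and the $W^{2,1}_p$ bound plus the Ladyzhenskaya--Solonnikov--Ural'tseva space-time embeddings for part (b). The only genuine divergence is how the $\tau,T$-independence of the constant in (a) is obtained. The paper gets it for free by citing Lamberton: his maximal regularity theorem for semigroups that are contractive on $L^p$ (which the Neumann heat semigroup is) already gives a constant independent of the length of the time interval, and one simply extends $\theta$ by zero and restricts, since the zero-initial-data solution on $(\tau,T)$ coincides with the half-line solution. Your alternative -- splitting off the spatial mean, noting $\Delta\phi=\Delta\phi^{\perp}$, and using exponential stability of the semigroup on the mean-zero subspace together with half-line maximal regularity for the (now invertible) restricted generator -- is a valid self-contained substitute; it invokes the standard fact that finite-interval maximal regularity plus exponential stability yields maximal regularity on $(0,\infty)$, and the boundedness of the mean-zero projection on $L^p(\Omega)$. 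What your route buys is independence from the precise form of Lamberton's statement (you only need maximal regularity on a fixed finite interval plus the spectral gap); what it costs is extra machinery that the citation makes unnecessary. Part (b) in your write-up matches the paper's argument, and in fact the subcritical embedding gives the slightly stronger $L^{p^*}$ bound, of which the stated $L^{p^*-}$ estimate is a weakening.
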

	\begin{proof}
		At first glance, \eqref{dual-problem} looks like a backward heat equation. However, with the change of variable $T+\tau - t$, we get the usual forward heat equation
		\begin{equation}\label{dual-problem-1}
			\begin{cases}
				\partial_t \varphi - m\Delta \varphi = \theta, &(x,t)\in Q_{\tau,T},\\
				\nabla \varphi \cdot \nu = 0, &(x,t)\in \partial\Omega\times(\tau,T),\\
				\varphi(x,\tau) = 0, &x\in\Omega.
			\end{cases}
		\end{equation}
		Therefore, part (a) can be found in \cite{Lam87} . Also from \cite{Lam87} we have
		\begin{equation*}
			\|\phi\|_{W^{(2,1)}_{p, Q_{\tau,T}}} \leq C_{m,p,T-\tau}\|\theta\|_{L^p(Q_{\tau,T})},
		\end{equation*}
		where $W^{(2,1)}_{p, Q_{\tau,T}}$ is defined via the norm
		\begin{equation*}
			\|f\|_{W^{(2,1)}_{p, Q_{\tau,T}}} = \sum_{2r+s\leq 2}\|\partial_t^r\partial_x^sf\|_{L^p(Q_{\tau,T})} <+\infty.
		\end{equation*}
		Part (b) then follows from this and the embeddings of space-time spaces, see e.g. \cite{LSU68}.
	\end{proof}
	\begin{lemma}\label{duality-lemma}
		Suppose the conditions of Theorem \ref{thm1} are satisfied. If
		\begin{equation}\label{quasi-uniform-1}
			C_{\frac{A+B}{2}, p'}<\frac{2}{B-A}
		\end{equation}
		for some $p'>1$. Then we have
		\begin{equation}\label{d0}
			\|h_i(u_i)\|_{L^p(Q_T)} \leq C_T \quad \text{ for all } \quad i=1,\ldots, m,
		\end{equation}
		where $p$ is the H\"older conjugate exponent of $p'$, i.e. $\frac 1p + \frac{1}{p'} = 1$. Consequently,
		\begin{equation}\label{d00}
			\|u_i\|_{L^p(Q_T)} \leq C_T \quad \text{ for all } \quad i=1,\ldots, m.
		\end{equation}
	\end{lemma}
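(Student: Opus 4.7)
The strategy is to derive a single evolution inequality for the aggregate $H := \sum_{i=1}^m h_i(u_i)$ and exploit it via an improved duality argument centered at the averaged diffusion coefficient $M := (A+B)/2$. Using the PDE for $u_i$ together with the convexity of $h_i$ (so that $h_i''(u_i)|\nabla u_i|^2 \geq 0$), a direct computation yields
\[
\partial_t h_i(u_i) - d_i\,\Delta h_i(u_i) = -d_i\,h_i''(u_i)|\nabla u_i|^2 + h_i'(u_i) f_i(u) \leq h_i'(u_i) f_i(u).
\]
Writing $d_i\Delta h_i(u_i) = M\Delta h_i(u_i) + (d_i-M)\Delta h_i(u_i)$, summing over $i$, and invoking \eqref{A3} produces the master inequality
\[
\partial_t H - M\Delta H \leq \sum_{i=1}^m (d_i-M)\,\Delta h_i(u_i) + K(1+H),
\]
subject to the homogeneous Neumann boundary condition.

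For any nonnegative $\theta \in L^{p'}(Q_T)$ with $\|\theta\|_{L^{p'}(Q_T)} \leq 1$, I would let $\phi \geq 0$ solve the backward dual problem \eqref{dual-problem} with diffusion coefficient $M$ and source $\theta$. Multiplying the master inequality by $\phi$ and integrating over $Q_T$ (using $\phi(T)=0$, the dual equation, and integration by parts in space with Neumann data) gives
\[
\int_{Q_T} H\theta \,dx\,dt \leq \int_\Omega H(0)\phi(0)\,dx + \sum_{i=1}^m (d_i-M)\int_{Q_T} h_i(u_i)\,\Delta\phi\,dx\,dt + K\int_{Q_T}(1+H)\phi\,dx\,dt.
\]
Since each $h_i(u_i)\geq 0$ and $|d_i-M|\leq (B-A)/2$, the middle term is majorised by $\tfrac{B-A}{2}\|H\|_{L^p(Q_T)}\|\Delta\phi\|_{L^{p'}(Q_T)}$, and Lemma \ref{maximal-regularity}(a) controls the last factor by $C_{M,p'}\|\theta\|_{L^{p'}(Q_T)}$ with a constant \emph{independent} of $T$. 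The remaining terms are handled using boundedness of the initial data together with Lemma \ref{maximal-regularity}(b) to estimate $\|\phi\|_{L^{p'}(Q_T)}$ and $\|\phi(0)\|_{L^{p'}(\Omega)}$ (the latter via the trace-in-time embedding of $W^{(2,1)}_{p',Q_T}$ into $C([0,T];L^{p'}(\Omega))$).

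Taking the supremum over admissible $\theta$ and using $\|H\|_{L^p(Q_T)} = \sup\int H\theta$ leads to an estimate of the schematic form
\[
\|H\|_{L^p(Q_T)} \leq \widetilde{C}_T + \tfrac{B-A}{2}\,C_{M,p'}\,\|H\|_{L^p(Q_T)} + K\,\widetilde{C}_T\,\|H\|_{L^p(Q_T)}.
\]
The quasi-uniform hypothesis \eqref{quasi-uniform-1} guarantees $\delta := \tfrac{B-A}{2}C_{M,p'} < 1$, so the second term is absorbable. The main obstacle is the third, $K$-dependent, term: its prefactor $K\widetilde{C}_T$ need not be small for large $T$. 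To circumvent this, I would run the duality argument on short subintervals $[t_k,t_{k+1}]$ of a fixed length $T_0$ chosen so that $\delta + K\widetilde{C}_{T_0} < 1$, and iterate $\lceil T/T_0\rceil$ times. At each restart, the missing $L^\infty$-initial bound is replaced by the $L^1(\Omega)$-bound $\|H(t_k)\|_{L^1(\Omega)}\leq C_T$, obtained by integrating the master inequality in space and applying Gronwall to the scalar ODE $\tfrac{d}{dt}\int_\Omega H \leq K(|\Omega|+\int_\Omega H)$; pairing this $L^1$ bound with the $L^\infty$ bound on $\phi$ at the interior time slices (guaranteed when $p'>(n+2)/2$ by Lemma \ref{maximal-regularity}(b)(ii), and replaceable by a similar maximal-regularity argument otherwise) closes the iteration.

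Finally, \eqref{d00} follows from \eqref{d0} by an elementary tangent-line estimate: since $h_i$ is convex, nonnegative, and satisfies $h_i'(z_i)>0$ at some $z_i\in[0,\infty)$, one has $h_i(z) \geq h_i(z_i) + h_i'(z_i)(z-z_i)$, and so $z \leq (h_i'(z_i))^{-1}h_i(z) + C_i$ for all $z\geq 0$ after enlarging constants. This gives $\|u_i\|_{L^p(Q_T)} \leq C\,\|h_i(u_i)\|_{L^p(Q_T)} + C'_T \leq C_T$, completing the proof.
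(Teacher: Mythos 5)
Your master inequality, the duality pairing against the dual problem with coefficient $\frac{A+B}{2}$, the absorption of the $\frac{B-A}{2}C_{\frac{A+B}{2},p'}\|H\|_{L^p}$ term via \eqref{quasi-uniform-1}, the Gronwall $L^1(\Omega)$ bound, and the tangent-line deduction of \eqref{d00} all match the paper's argument. The divergence, and the gap, is in your treatment of the $K$-term. You estimate $K\int_{Q_T}(1+H)\phi$ linearly as $K\widetilde C_T\|H\|_{L^p(Q_T)}$ and then try to make the prefactor small by slicing $[0,T]$ into intervals of length $T_0$ and restarting. But each restart produces the slice term $\int_\Omega H(t_k)\phi(t_k)\,dx$, and the only information available at an intermediate time $t_k$ is $\|H(t_k)\|_{L^1(\Omega)}$; pairing it as you propose requires $\|\phi(t_k)\|_{L^\infty(\Omega)}\leq C\|\theta\|_{L^{p'}}$, which Lemma \ref{maximal-regularity}(b)(ii) gives only for $p'>\frac{n+2}{2}$. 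In the regime the lemma is actually used, $p'$ is necessarily \emph{below} this threshold: Theorem \ref{thm1} needs $p=\frac{p'}{p'-1}>\frac{n+2}{2}(r-1)$, so for $r\geq 2$, $n\geq2$ one has $p'<\frac{n+2}{n}\leq\frac{n+2}{2}$, and in the two-dimensional Lemma \ref{Lp>2} one takes $p'<2=\frac{n+2}{2}$. There is no ``similar maximal-regularity argument'' in that range: the heat-semigroup smoothing $L^{p'}(\Omega)\to L^\infty(\Omega)$ integrated up to the initial slice converges exactly when $p'>\frac{n+2}{2}$, so $\phi(t_k)$ need not be bounded, and an $L^p$--$L^{p'}$ pairing is unavailable because a space-time $L^p(Q)$ bound from the previous interval does not control $\|H(t_k)\|_{L^p(\Omega)}$ at a prescribed time. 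A time cutoff instead of a hard restart does not help either, since the resulting $\varphi'H\phi$ term carries a factor $\sim 1/T_0$ that cancels the smallness of $\widetilde C_{T_0}$.

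The paper closes this exact point without any slicing: in \eqref{d4} the $K$-term is estimated as $K\|\phi\|_{L^{s}(Q_T)}(1+\|Z\|_{L^{s'}(Q_T)})$ with $s$ chosen just below the parabolic embedding exponent $\frac{(n+2)p'}{n+2-2p'}$ so that $s'<p$, and then $\|Z\|_{L^{s'}(Q_T)}\leq C_T\|Z\|_{L^p(Q_T)}^{\alpha}$ by interpolating between $L^p(\Omega)$ and the Gronwall-controlled $L^\infty(0,T;L^1(\Omega))$ norm, with $\alpha\in(0,1)$. The resulting sublinear term is absorbed by Young's inequality \emph{regardless} of the size of $K$ and $C_T$, so only the genuine initial slice $t=0$ (where the data are bounded, hence $Z(0)\in L^p(\Omega)$) ever appears. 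If you want to keep your iteration, you would need a substitute for the slice estimate, e.g.\ a mean-value selection of the restart times so that $\|H(t_k)\|_{L^p(\Omega)}$ is controlled by the previous interval's space-time norm; as written, the argument does not close.
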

	\begin{proof}
		We compute
		\begin{equation*}
			\begin{aligned}
			\partial_t\left(\sum_{i=1}^{m} h_i(u_i)\right) &= \sum_{i=1}^mh_i'(u_i)\partial_t u_i\\
			&= \sum_{i=1}^{m}h_i'(u_i)[d_i\Delta u_i + f_i(u)]\\
			&= \sum_{i=1}^m[d_i\Delta(h_i(u_i)) - d_ih_i''(u_i)|\nabla u_i|^2] + \sum_{i=1}^mh_i'(u_i)f_i(u)\\
			&\leq \Delta\left(\sum_{i=1}^{m}d_ih_i(u_i)\right) + K\left(1+\sum_{i=1}^mh_i(u_i) \right)
			\end{aligned}
		\end{equation*}
		where we used \eqref{A3} and $h_i''(u_i)\geq 0$, since $h_i$ is convex, at the last step. Define
		\begin{equation*}
			Z(x,t) = \sum_{i=1}^mh_i(u_i) \quad \text{ and } \quad M(x,t) = \frac{\sum_{i=1}^md_ih_i(u_i)}{Z}.
		\end{equation*}
		Then we have
		\begin{equation*}
			A \leq M(x,t) \leq B,
		\end{equation*}
		since $A = \min_{i=1,\ldots, m}d_i$ and $B = \max_{i=1,\ldots, m}d_i$. We therefore have
		\begin{equation}\label{d1}
			\partial_tZ \leq \Delta(MZ) + K(1+Z) \quad \text{ in } \quad Q_T,
		\end{equation}
		and $\nabla Z \cdot \nu = 0$ on $\partial\Omega\times(0,T)$. By integrating \eqref{d1} on $\Omega$ we get
		\begin{equation*}
			\partial_t\int_{\Omega}Z(x,t)dx \leq K\left(1+\int_{\Omega}Z(x,t)dx \right),
		\end{equation*}
		thus, thanks to Gronwall's lemma and the nonnegativity of $Z$,
		\begin{equation}\label{d1_1}
			\|Z(t)\|_{L^1(\Omega)} \leq Ce^{KT}\left(1 + \|Z(0)\|_{L^1(\Omega)}\right).
		\end{equation}
		Define $m = \frac{A+B}{2}$ and let $\phi$ be the nonnegative solution to
		\begin{equation*}
		\begin{cases}
			\partial_t \phi + m\Delta \phi = -\theta, &(x,t)\in Q_T,\\
			\nabla \phi \cdot \nu = 0, &(x,t)\in \partial\Omega\times(0,T),\\
			\phi(x,T) = 0, &x\in\Omega,\\
		\end{cases}
		\end{equation*}
		where $0\leq \theta \in L^{p'}(Q_T)$. Thanks to $\theta \geq 0$, we also have $\phi \geq 0$. 
		From Lemma \ref{maximal-regularity} we have
		\begin{equation}\label{d2}
			\|\Delta \phi\|_{L^{p'}(Q_T)} \leq C_{m,p'}\|\theta\|_{L^{p'}(Q_T)},
		\end{equation}
		\begin{equation}\label{d3}
			\|\partial_t\phi\|_{L^{p'}(Q_T)} + \|\phi\|_{L^s(Q_T)} \leq C_{m,p,T}\|\theta\|_{L^{p'}(Q_T)},
		\end{equation}
		for all $s < \frac{(n+2)p'}{n+2-2p'}$. By using integration by parts and \eqref{d1}, we compute
		\begin{equation}\label{d4}
			\begin{aligned}
				&\int_0^T\int_{\Omega}Z\theta dxdt\\
				&= -\int_0^T\int_{\Omega}Z(\partial_t \phi + m\Delta \phi)dxdt\\
				&= -\int_0^T\int_{\Omega}Z(\partial_t \phi + M\Delta \phi)dxdt + \int_0^T\int_{\Omega}Z(M-m)\Delta \phi dxdt\\
				&= \int_{\Omega}Z(0)\phi(0) +\int_0^T\int_{\Omega}\phi(\partial_t Z - \Delta(MZ))dxdt + \int_0^T\int_{\Omega}Z(M - m)\Delta \phi dxdt\\
				&\leq \int_{\Omega}Z(0)\phi(0) + K\int_0^T\int_{\Omega}\phi(1+Z)dxdt + \int_0^T\int_{\Omega}Z(M-m)\Delta \phi dxdt\\
				&\leq \|Z(0)\|_{L^p(\Omega)}\|\phi(0)\|_{L^{p'}(\Omega)} + K\|\phi\|_{L^s(Q_T)}\left(1+\|Z\|_{L^{s'}(Q_T)}\right)\\
				&\quad  + \frac{B-A}{2}\|Z\|_{L^p(Q_T)}\|\Delta \phi\|_{L^{p'}(Q_T)}
			\end{aligned}
		\end{equation}
		for some constant $s < \frac{(n+2)p'}{n+2-2p'}$ and $s' = \frac{s}{s-1}$. We will estimate the last three terms on the right-hand side of \eqref{d4} separately. First, using \eqref{d3},
		\begin{equation*}
			\|\phi(0)\|_{L^{p'}(\Omega)}^{p'} = \int_{\Omega}\left|\int_0^T\partial_t\phi(z)dz\right|^{p'}dx \leq C_T\|\partial_t\phi\|_{L^{p'}(Q_T)}^{p'} \leq C_T\|\theta\|_{L^{p'}(Q_T)}^{p'}.
		\end{equation*}
		Hence
		\begin{equation}\label{d5}
			\|Z(0)\|_{L^p(\Omega)}\|\phi(0)\|_{L^{p'}(\Omega)} \leq C_T\|Z(0)\|_{L^p(\Omega)}\|\theta\|_{L^{p'}(Q_T)}.
		\end{equation}
		By using \eqref{d3}, 
		\begin{equation}\label{d6}
			\frac{B-A}{2}\|Z\|_{L^p(Q_T)}\|\Delta \phi\|_{L^{p'}(Q_T)} \leq \frac{B-A}{2}C_{\frac{A+B}{2}, p'}\|Z\|_{L^p(Q_T)}\|\theta\|_{L^{p'}(Q_T)}.
		\end{equation}
		From \eqref{d3}, we choose $s = \frac{(n+2)p'}{n+3 - 2p'} < \frac{(n+2)p'}{n+2-2p'}$ and thus
		\begin{equation*}
			s' = \frac{s}{s-1} = \frac{(n+2)p}{n+3+p} < p.
		\end{equation*}
		Therefore, by interpolation inequality
		\begin{equation*}
			\|Z\|_{L^{s'}(Q_T)}^{s'} = \int_0^T\|Z\|_{L^{s'}(\Omega)}^{s'}dt \leq \int_0^T\|Z\|_{L^p(\Omega)}^{\alpha s'}\|Z\|_{L^1(\Omega)}^{(1-\alpha)s'}dt \leq C_T\|Z\|_{L^{p}(Q_T)}^{\alpha s'}
		\end{equation*}
		for some $\alpha\in (0,1)$, where we have used \eqref{d1_1} at the last step. Using this, we can estimate the second term on the right-hand side of \eqref{d4} as
		\begin{equation}\label{d7}
			K\|\phi\|_{L^{s}(Q_T)}\left(1+\|Z\|_{L^{s'}(Q_T)} \right) \leq C_T\|\theta\|_{L^{p'}(Q_T)}\left(1 + C_T\|Z\|_{L^p(Q_T)}^{\alpha}\right).
		\end{equation}
		Inserting \eqref{d5}, \eqref{d6} and \eqref{d7} into \eqref{d4}, we obtain
		\begin{equation*}
			\begin{aligned}
			\int_0^T\int_{\Omega}Z\theta dxdt &\leq C_T\|\theta\|_{L^{p'}(Q_T)}\left[\|Z(0)\|_{L^p(\Omega)}
			+ \|Z\|_{L^p(Q_T)}^{\alpha} \right] \\
			&\quad + \|\theta\|_{L^{p'}(Q_T)}\frac{B-A}{2}C_{\frac{A+B}{2}, p'}\|Z\|_{L^p(Q_T)} 
			\end{aligned}
		\end{equation*}
		for all $0\leq \theta \in L^{p'}(Q_T)$. By duality,
		\begin{equation*}
			\|Z\|_{L^p(Q_T)} \leq C_T\left[\|Z(0)\|_{L^p(\Omega)}  + \|Z\|_{L^p(Q_T)}^{\alpha} \right]+ \frac{B-A}{2}C_{\frac{A+B}{2}, p'}\|Z\|_{L^p(Q_T)},
		\end{equation*}
		and thanks to the assumption \eqref{quasi-uniform-1},
		\begin{equation*}
			\|Z\|_{L^p(Q_T)} \leq \left(1 - \frac{B-A}{2}C_{\frac{A+B}{2}, p'}\right)^{-1}C_T\left[\|Z(0)\|_{L^p(\Omega)}  + \|Z\|_{L^p(Q_T)}^{\alpha} \right].
		\end{equation*}
		Since $\alpha \in (0,1)$, we can use Young's inequality to finally obtain
		\begin{equation*}
			\|Z\|_{L^p(Q_T)} \leq C_T\left(1+\|Z(0)\|_{L^p(\Omega)}\right)
		\end{equation*}
		which implies \eqref{d0} due to $Z = \sum_{i=1}^m h_i(u_i)$ and the nonnegativity of $h_i(u_i)$.
		
		\medskip
		To get \eqref{d00}, we use the convexity of $h_i$ and the fact that $h_i'(z_i) > 0$ to obtain
		\begin{equation*}
			h_i(z) \geq h_i'(z_i)z + h_i(z_i) - z_ih_i'(z_i) \quad \text{ for all } \quad z\geq z_i.
		\end{equation*}
		Since $h_i$ is continuous on $[0,\infty)$, it is bounded on $[0,z_i]$ and therefore, we have
		\begin{equation}\label{d8}
			h_i(z) \geq h_i'(z_i)z - C \quad \text{ for all } \quad z \geq 0,
		\end{equation}
		for some constant $C>0$. It then follows from \eqref{d0} that
		\begin{equation*}
			\|u_i\|_{L^p(Q_T)} \leq C_T\left(1+\|Z(0)\|_{L^p(\Omega)}\right).
		\end{equation*}
	\end{proof}
	\begin{lemma}\label{Lp>2}
		Suppose the conditions of Theorem \ref{thm1} are satisfied. There exists $p>2$ such that 
		\begin{equation*}
			\|u_i\|_{L^p(Q_T)} \leq C_T \quad \text{ for all } \quad i=1,\ldots, m.
		\end{equation*}
	\end{lemma}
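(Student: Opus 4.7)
The lemma is a direct corollary of Lemma~\ref{duality-lemma}. Under the hypotheses of Theorem~\ref{thm1}, we are furnished with some $p'_0 > 1$ satisfying the strict inequality \eqref{quasi-uniform-1} and whose associated $p_0 := p'_0/(p'_0-1)$ exceeds $\tfrac{n+2}{2}(r-1)$. Invoking Lemma~\ref{duality-lemma} at this $p'_0$ immediately yields $\|u_i\|_{L^{p_0}(Q_T)} \leq C_T$ for all $i$, and the only remaining task is to ensure that the exponent can be taken strictly greater than $2$.

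When $r \geq 1 + \tfrac{4}{n+2}$ the bound $p_0 > \tfrac{n+2}{2}(r-1) \geq 2$ is automatic from the hypothesis, so this regime needs no further argument. In the complementary case, where $r$ is close to $1$, I would exploit the fact that \eqref{quasi-uniform-1} is an open condition: from the $L^p$-maximal regularity quoted in Lemma~\ref{maximal-regularity} the constant $C_{\frac{A+B}{2}, p'}$ is a continuous function of $p'$ on $(1, \infty)$, blowing up only as $p' \to 1$ or $p' \to \infty$, so the set of admissible $p'$ is open in $(1, \infty)$. Shrinking $p'_0$ slightly --- possibly into $(1, 2)$ --- keeps us inside this set thanks to the strict slack $\tfrac{2}{B-A} - C_{\frac{A+B}{2}, p'_0} > 0$ supplied by the hypothesis, and applying Lemma~\ref{duality-lemma} at such a perturbed $p'$ delivers the claim with $p > 2$.

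The main anticipated obstacle is precisely this perturbation step in the small-$r$ regime: one needs the open admissible set to reach below $p' = 2$, which is a quantitative matter about the shape of $p' \mapsto C_{\frac{A+B}{2}, p'}$ rather than a qualitative one. In practice this is handled using the boundedness of $C_{\frac{A+B}{2}, p'}$ on compact subsets of $(1, \infty)$, so that the open admissible neighbourhood around $p'_0$ is wide enough to accommodate a selection with $p' < 2$. With that selection in hand, Lemma~\ref{duality-lemma} closes the proof.
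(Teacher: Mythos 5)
You correctly identify that the lemma reduces, via Lemma~\ref{duality-lemma}, to producing some $p'<2$ for which \eqref{quasi-uniform-1} holds (so that $p=p'/(p'-1)>2$), and your first regime ($r\geq 1+\tfrac{4}{n+2}$, where the hypothesized exponent already exceeds $2$) is fine. The gap is in the complementary regime. Perturbing the hypothesized $p'_0$ ``slightly'' cannot in general bring it below $2$: when $r$ is close to $1$ the hypothesis only forces $p_0>\tfrac{n+2}{2}(r-1)$, so $p'_0$ may be far above $2$, and openness of the admissible set $\{p' : C_{\frac{A+B}{2},p'}<\tfrac{2}{B-A}\}$ around $p'_0$ says nothing about whether that set reaches below $p'=2$. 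Your proposed fix --- boundedness of $C_{\frac{A+B}{2},p'}$ on compact subsets of $(1,\infty)$ --- does not close this: the constant can be locally bounded and continuous and still exceed $\tfrac{2}{B-A}$ for every $p'\leq 2$, as far as your argument shows, so no selection with $p'<2$ is guaranteed.

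What is actually needed, and what the paper invokes, is the quantitative input of \cite[Lemma 3.2]{CDF14}: for the dual problem with diffusion $m=\tfrac{A+B}{2}$ one has $C_{m,2}\leq \tfrac1m=\tfrac{2}{A+B}$ (an elementary $L^2$ energy estimate), which is strictly smaller than $\tfrac{2}{B-A}$ because $A>0$, together with the continuity of $p'\mapsto C_{m,p'}$ as $p'\to 2^-$. This produces some $p'<2$ satisfying \eqref{quasi-uniform-1} for \emph{arbitrary} diffusion coefficients --- no quasi-uniformity and no reference to the hypothesized $p'_0$ is needed --- and then Lemma~\ref{duality-lemma} finishes exactly as you say. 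Note also that continuity of the maximal regularity constant in $p'$ is not part of Lemma~\ref{maximal-regularity}; the behaviour near $p'=2$ is precisely the content of the cited result in \cite{CDF14}, so it must be invoked (or proved), not assumed.
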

	\begin{proof}
		We only need to show that \eqref{quasi-uniform-1} holds for some $p' < 2$. Indeed, this can found in \cite[Lemma 3.2]{CDF14}, therefore the proof of Lemma \ref{Lp>2} follows from Lemma \ref{duality-lemma}.
	\end{proof}
	\begin{proposition}[Bootstrap technique]\label{bootstrap}
	Suppose the conditions of Theorem \ref{thm1} are satisfied.  
		If 
		\begin{equation*}
			\|u_i\|_{L^{p_0-}(Q_T)} \leq C_T \quad \text{ for all } \quad i=1,\ldots, m,
		\end{equation*}
		for some $p_0 > \frac{n+2}{2}(r-1)$, then 
		\begin{equation}\label{boundedness}
			\|u_i\|_{L^\infty(Q_T)} \leq C_T \quad \text{ for all} \quad i=1,\ldots, m.
		\end{equation}
	\end{proposition}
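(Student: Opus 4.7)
The plan is to iterate the $L^p$-integrability of $u$. Introduce the notation $p_* := \tfrac{(n+2)p}{r(n+2)-2p}$ when $p < \tfrac{r(n+2)}{2}$, and $p_* := +\infty$ otherwise: this is exactly the parabolic Sobolev exponent of $p/r$ appearing in Lemma \ref{maximal-regularity}(b). The map $p \mapsto p_*$ has unique positive fixed point $p^\star := \tfrac{(n+2)(r-1)}{2}$ and is strictly increasing on $(p^\star, \infty)$, so the iterates $p_N$ starting from $p_0 > p^\star$ form a strictly increasing sequence that exceeds $\tfrac{r(n+2)}{2}$ in finitely many steps, after which one more application yields $L^\infty$. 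Hence the whole proposition reduces to proving the single iteration step: if $u_i \in L^{p-}(Q_T)$ for all $i$ with $p > p^\star$, then $u_i \in L^{p_*-}(Q_T)$ for all $i$.

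For this iteration step I induct on the row index $k$ of \eqref{A4}, exploiting its lower-triangular structure. For $k=1$: row 1 gives $f_1(u) \leq \tfrac{C}{a_{11}}(1+|u|)^r \in L^{p/r-}(Q_T)$, and since $u_1 \geq 0$, the comparison principle yields $u_1 \leq v_1$, where $v_1$ is the nonnegative solution of $\partial_t v_1 - d_1\Delta v_1 = \tfrac{C}{a_{11}}(1+|u|)^r$ with the same Neumann/initial data; Lemma \ref{maximal-regularity} places $v_1$, hence $u_1$, in $L^{p_*-}(Q_T)$. For $k \geq 2$, assume $u_j \in L^{p_*-}(Q_T)$ for all $j<k$, and set $W_k := \sum_{j\leq k} a_{kj} u_j \geq 0$, $\tilde W_k := \sum_{j\leq k} a_{kj} d_j u_j$, and $M_k := \tilde W_k/W_k \in [A,B]$. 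A direct computation using the equations for $u_j$ and row $k$ of \eqref{A4} gives
\[
\partial_t W_k - \Delta(M_k W_k) \leq C(1+|u|)^r, \qquad \nabla W_k \cdot \nu = 0,
\]
which is precisely the structure of Lemma \ref{duality-lemma} with source $C(1+|u|)^r \in L^{p/r-}(Q_T)$ in place of $K(1+Z)$.

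Running that duality argument verbatim — testing against $\partial_t \phi + \tfrac{A+B}{2}\Delta \phi = -\theta$ with $\theta \in L^{(p_*)'}(Q_T)$ — the cross term $\tfrac{B-A}{2}\|W_k\|_{L^{p_*}}\|\Delta\phi\|_{L^{(p_*)'}}$ is absorbed using the quasi-uniform condition \eqref{quasi-uniform}, while the new source term $\int_0^T\!\!\int_\Omega \phi(1+|u|)^r\,dx\,dt$ is bounded by Hölder as $\|\phi\|_{L^{(p/r)'}}\,\|(1+|u|)^r\|_{L^{p/r}}$. The key algebraic identity $((p_*)')^* = (p/r)'$, a direct consequence of the definition of $p_*$, makes Lemma \ref{maximal-regularity}(b) deliver $\|\phi\|_{L^{(p/r)'-}(Q_T)} \leq C_T\|\theta\|_{L^{(p_*)'}(Q_T)}$, and the duality closes. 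Taking supremum over $\theta$ yields $\|W_k\|_{L^{p_*-}(Q_T)} \leq C_T$, and from $a_{kk}u_k = W_k - \sum_{j<k} a_{kj} u_j$ together with the induction hypothesis, $u_k \in L^{p_*-}(Q_T)$.

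The main obstacle is verifying that the quasi-uniform condition \eqref{quasi-uniform}, formulated at one specific exponent $p'$, continues to absorb the cross term at each of the finitely many exponents $(p_N)'$ visited by the iteration. Since the iteration terminates in finitely many steps and the conjugates $(p_N)'$ remain in a compact subinterval of $(1,\infty)$ bounded away from $1$ (concretely $(p_N)' \geq (r(n+2)/2)' > 1$), the Calder\'on--Zygmund constant $C_{(A+B)/2,p'}$ is uniformly bounded on this range, and \eqref{quasi-uniform} propagates along the finite chain. At the terminal step ($p_N \geq r(n+2)/2$), Lemma \ref{maximal-regularity}(b)(ii) places the test function $\phi$ in $L^\infty(Q_T)$, and the same duality argument then puts $W_k$ and hence each $u_i$ in $L^\infty(Q_T)$, completing the proof.
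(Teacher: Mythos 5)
Your exponent iteration and your treatment of the first component ($k=1$ via comparison and Lemma \ref{maximal-regularity}) coincide with the paper's proof, but your induction step for $k\geq 2$ contains a genuine gap. You aggregate $W_k=\sum_{j\leq k}a_{kj}u_j$, write $\partial_t W_k\leq \Delta(M_kW_k)+C(1+|u|)^r$ with $M_k\in[A,B]$, and rerun the duality argument of Lemma \ref{duality-lemma} with the averaged coefficient $\frac{A+B}{2}$. This forces you to absorb the cross term $\frac{B-A}{2}\|W_k\|_{L^{q}}\|\Delta\phi\|_{L^{q'}}$ at \emph{every} exponent $q=p_N$ visited by the bootstrap, i.e.\ you need $C_{\frac{A+B}{2},(p_N)'}<\frac{2}{B-A}$ for each $N$. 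But \eqref{quasi-uniform} is assumed at a single exponent $p'$ only, and your justification confuses boundedness with smallness: the fact that $p'\mapsto C_{\frac{A+B}{2},p'}$ is finite (even uniformly bounded) on a compact subinterval of $(1,\infty)$ does not give the required inequality $C_{\frac{A+B}{2},(p_N)'}<\frac{2}{B-A}$ as $(p_N)'$ decreases toward $\bigl(\frac{r(n+2)}{2}\bigr)'$; in general these constants grow as the exponent moves away from $2$ and the smallness condition simply fails there. Under the stated hypotheses the proposition therefore does not follow from your argument, and indeed it could not: in the proof of Theorem \ref{thm2} the proposition is applied with \emph{arbitrary} diffusion coefficients, where quasi-uniformity is only available (via \cite[Lemma 3.2]{CDF14}) for some $p'$ slightly below $2$, yet the bootstrap must reach arbitrarily large exponents.

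The paper's proof avoids any absorption in the induction over $k$ by exploiting the lower-triangular structure of \eqref{A4} differently: in the $k$-th step the dual problem is taken with the single coefficient $d_k$ (not $\frac{A+B}{2}$), so no term of the form $(M-m)\Delta\phi$ multiplying the unknown quantity appears at all; the contributions of $u_j$, $j<k$, show up as $\int\!\!\int u_j\bigl(-\theta+(d_k-d_j)\Delta\phi\bigr)$ and are estimated directly by H\"older using the $L^{p_{N+1}-}$ bounds already established for $j<k$, with constants that need not be small. Quasi-uniformity is used only once, in Lemma \ref{duality-lemma}, to produce the starting exponent $p_0>\frac{n+2}{2}(r-1)$. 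A secondary problem is your terminal step: obtaining $\|W_k\|_{L^\infty}$ ``by the same duality argument'' with $\phi\in L^\infty$ would require testing against $\theta\in L^1$, where the maximal regularity of Lemma \ref{maximal-regularity} is not available; the paper instead passes from $u_i\in L^{\infty-}(Q_T)$ to $L^\infty(Q_T)$ using the polynomial bound \eqref{A5}, the comparison principle, and Lemma \ref{maximal-regularity}(b)(ii), and you should do the same.
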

	\begin{proof}
		To prove Proposition \ref{bootstrap}, we construct a sequence $\{p_N\}_{N\geq 0}$ with
		\begin{equation}\label{p_N}
			p_{N+1} = \frac{(n+2)\frac{p_N}{r}}{n+2-2\frac{p_N}{r}}
		\end{equation}
		as long as $p_N \leq \frac{r(n+2)}{2}$, here we use the convention $\frac 10 = \infty$, such that
		\begin{equation*}
			\|u_i\|_{L^{p_{N+1}-}(Q_T)} \leq C_T
		\end{equation*}
		for all $N\geq 0$. We will do that in three steps.
		
		\medskip
		{\bf Step 1.} We claim that
		\begin{equation}\label{1claim}
			\text{ if } \sup_{i=1,\ldots, m}\|u_i\|_{L^{p_N-}(Q_T)}\leq C_T \quad \text{ then } u_1\in L^{p_{N+1}-}(Q_T).
		\end{equation}
		Let $q< p_{N+1}$ arbitrary. We choose $\wh{q}$ such that $q<\wh{q}<p_{N+1}$, and define
		\begin{equation}\label{e00}
			s: = \frac{(n+2)r\wh{q}}{n+2+2\wh{q}},\quad \text{ which is equivalent to }\quad \wh{q} = \frac{(n+2)\frac{s}{r}}{n+2-2\frac{s}{r}}.
		\end{equation}
		From $\wh{q}<p_{N+1}$ and \eqref{p_N}, it follows that $s<p_N$. Therefore,
		\begin{equation}\label{e1}
			\|u_i\|_{L^s(Q_T)} \leq C_T \quad \text{ for all } \quad i=1,\ldots, m.
		\end{equation}
		From the intermediate sum condition \eqref{A4}, we have
		\begin{equation}\label{e2}
			\partial_t(a_{11}u_1) - d_1\Delta(a_{11}u_1) = a_{11}f_1(u)\leq C\left(1+\sum_{i=1}^mu_i\right)^r=: g_1(u).
		\end{equation}
		From \eqref{e1}, the right hand side of \eqref{e2} satisfies
		\begin{equation*}
			\|g_1(u)\|_{L^{\frac{s}{r}}(Q_T)} \leq C_T.
		\end{equation*}
		Using comparison principle and maximal regularity in Lemma \ref{maximal-regularity} we have
		\begin{equation*}
			\|a_{11}u_1\|_{L^{\wh{q}-}(Q_T)} \leq C_T.
		\end{equation*}
		Recalling that $q<\wh{q}$, it follows
		\begin{equation*}
			\|u_1\|_{L^q(Q_T)} \leq C_T.
		\end{equation*}
		Since $q<p_{N+1}$ arbitrary, we obtain the claim \eqref{1claim}.
		
		\medskip
		{\bf Step 2.} We claim that, for any $k=2,\ldots, m$
		\begin{equation}\label{2claim}
		\begin{gathered}
			\text{ if } \sup_{i=1,\ldots, m}\|u_i\|_{L^{p_N-}(Q_T)} \leq C_T \text{ and } \sup_{j=1,\ldots,k-1}\|u_i\|_{L^{p_{N+1}-}(Q_T)}\leq C_T\\
			\text{ then } \|u_{k}\|_{L^{p_{N+1}-}(Q_T)}\leq C_T.
		\end{gathered}
		\end{equation}
		Fix $q<p_{N+1}$. Let $0\leq \theta \in L^{q'}(Q_T)$ with $q' = \frac{q}{q-1}$, and let $\phi$ be the nonnegative solution to
		\begin{equation}\label{dual}
		\begin{cases}
			\partial_t \phi + d_k\Delta \phi = -\theta, &(x,t)\in Q_T,\\
			\nabla\phi\cdot \nu = 0, &(x,t)\in \partial\Omega\times(0,T),\\
			\phi(x,T) = 0, &x\in\Omega.
		\end{cases}
		\end{equation}
		From Lemma \ref{maximal-regularity},
		\begin{equation}\label{e2_1}
			\|\partial_t \phi\|_{L^{q'}(Q_T)} + \|\Delta \phi\|_{L^{q'}(Q_T)}\leq C_T\|\theta\|_{L^{q'}(Q_T)}
		\end{equation}
		and
		\begin{equation}\label{e3}
			\|\phi\|_{L^{\gamma}(Q_T)} \leq C\|\theta\|_{L^{q'}(Q_T)} \quad \text{ for all } \quad \gamma < \frac{(n+2)q'}{n+2-2q'}.
		\end{equation}
		From the intermediate sums condition \eqref{A4} we have
		\begin{equation*}
			\partial_t\sum_{j=1}^ka_{kj}u_j - \sum_{j=1}^ka_{kj}d_j\Delta u_j = \sum_{j=1}^ka_{kj}f_i(u) \leq C\left(1+\sum_{i=1}^mu_i\right)^r,
		\end{equation*}
		thus
		\begin{equation}\label{e3_1}
			\partial_t(a_{kk}u_k) - d_k\Delta(a_{kk}u_k) \leq -\sum_{j=1}^{k-1}\left(\partial_t (a_{kj}u_j) - d_j\Delta (a_{kj}u_j) \right) + C\left(1+\sum_{i=1}^mu_i\right)^r.
		\end{equation}
		Using integration by parts we have
		\begin{equation}\label{e4}
		\begin{aligned}
			a_{kk}\int_0^T\int_{\Omega}u_k\theta dxdt &= -a_{kk}\int_0^T\int_{\Omega}u_k(\partial_t\phi + d_k\Delta \phi)dxdt\\
			&= a_{kk}\int_{\Omega}u_{k,0}\phi(0)dx + a_{kk}\int_0^T\int_{\Omega}\phi(\partial_t u_k - d_k\Delta u_k)dxdt\\
			&\leq a_{kk}\int_{\Omega}u_{k,0}\phi(0)dx - \int_0^T\int_{\Omega}\phi\sum_{j=1}^{k-1}(\partial_t(a_{kj}u_j - d_j\Delta(a_{kj}u_j)))dxdt\\
			&\quad + C\int_0^T\int_{\Omega}\phi\left(1+\sum_{i=1}^mu_i\right)^rdxdt\\
			&=\sum_{j=1}^ka_{kj}\int_{\Omega}u_{j,0}\phi(0)dx + \sum_{j=1}^{k-1}a_{kj}\int_0^T\int_{\Omega}u_j(\partial_t \phi - d_j\Delta\phi)dxdt\\
			&\quad + C\int_0^T\int_{\Omega}\phi dxdt + C\sum_{i=1}^m\int_0^T\int_{\Omega}\phi u_i^rdxdt\\
			&=: (I) + (II) + (III) + (IV).
		\end{aligned}
		\end{equation}
		We will estimate these four terms separately. Firstly, by H\"older's inequality
		\begin{equation*}
			|(I)| \leq C\sum_{i=1}^k\|u_{j,0}\|_{L^{q}(\Omega)}\|\phi(0)\|_{L^{q'}(\Omega)}.
		\end{equation*}
		Thanks to \eqref{e2_1} it follows that
		\begin{equation*}
			\|\phi(0)\|_{L^{q'}(\Omega)}^{q'} = \int_{\Omega}\left|\int_0^T\partial_t\phi(z)dz\right|^{q'}dx\leq C_T\|\partial_t\phi\|_{L^{q'}(Q_T)}^{q'},
		\end{equation*}
		and therefore
		\begin{equation}\label{A}
			|(I)| \leq C_T\sum_{j=1}^k\|u_{j,0}\|_{L^q(\Omega)}\|\theta\|_{L^{q'}(Q_T)}.
		\end{equation}
		For $(II)$, we use \eqref{dual}, \eqref{e2_1} and H\"older's inequality to estimate
		\begin{equation}\label{B}
		\begin{aligned}
			|(II)| &\leq \sum_{j=1}^{k-1}a_{kj}\int_0^T\int_{\Omega}|u_j||-\theta + (d_k - d_j)\Delta \phi|dxdt\\
			&\leq C\sum_{j=1}^{k-1}\|u_j\|_{L^{q}(Q_T)}\left[\|\theta\|_{L^{q'}(Q_T)} + \|\Delta\phi\|_{L^{q'}(Q_T)} \right]\\
			&\leq C\sum_{j=1}^{k-1}\|u_j\|_{L^q(Q_T)}\|\theta\|_{L^{q'}(Q_T)}.
		\end{aligned}
		\end{equation}
		The term $(III)$ is easily estimated that
		\begin{equation}\label{C}
			|(III)| \leq C_T\|\phi\|_{L^{p'}(Q_T)}\leq C_T\|\theta\|_{L^{p'}(Q_T)}.
		\end{equation}
		Since $q < p_{N+1}$ and $q' = \frac{q}{q-1}$,
		\begin{equation*}
			\frac{(n+2)q'}{n+2-2q'} = \frac{(n+2)q}{nq - (n+2)} > \frac{(n+2)p_{N+1}}{np_{N+1} - (n+2)} = \frac{p_N}{p_N - r}
		\end{equation*}
		where we used \eqref{p_N} at the last step. From this we can choose $s$ such that
		\begin{equation}\label{def_s}
			\frac{(n+2)q'}{n+2 - 2q'} > s > \frac{p_N}{p_N - r},
		\end{equation}
		and it follows from \eqref{e3} that
		\begin{equation*}
			\|\phi\|_{L^s(Q_T)}\leq C\|\theta\|_{L^{q'}(Q_T)}.
		\end{equation*}
		We now estimate the $(IV)$ as, with $s' = \frac{s}{s-1}$,
		\begin{equation}\label{D}
		\begin{aligned}
			|(IV)| &\leq C\sum_{i=1}^m\|\phi\|_{L^s(Q_T)}\|u_i\|_{L^{rs'}(Q_T)}^r\\
			&\leq C\sum_{i=1}^m\|\theta\|_{L^{q'}(Q_T)}\|u_i\|_{L^{rs'}(Q_T)}^r\\
			&\leq C\sum_{i=1}^m\|\theta\|_{L^{q'}(Q_T)}C_T
		\end{aligned}
		\end{equation}
		where at the last step we used $rs' = \frac{rs}{s-1} < p_N$, thanks to \eqref{def_s}, and thus
		\begin{equation*}
			\|u_i\|_{L^{rs'}(Q_T)}\leq C_T \quad \text{ for all } \quad i=1,\ldots, m.
		\end{equation*}
		Inserting \eqref{A}, \eqref{B}, \eqref{C} and \eqref{D} into \eqref{e4}, we obtain
		\begin{equation*}
			a_{kk}\int_0^T\int_{\Omega}u_k\theta dxdt \leq C_T\left[\sum_{j=1}^m\|u_{j,0}\|_{L^q(\Omega)}+ \sum_{j=1}^{k-1}\|u_j\|_{L^q(Q_T)} +1\right]\|\theta\|_{L^{q'}(Q_T)},
		\end{equation*}
		for all $0\leq \theta \in L^{q'}(Q_T)$, and therefore, by duality
		\begin{equation*}
			\|u_k\|_{L^{q}(Q_T)} \leq C_T\left[\sum_{j=1}^m\|u_{j,0}\|_{L^q(\Omega)}+ \sum_{j=1}^{k-1}\|u_j\|_{L^q(Q_T)} +1\right],
		\end{equation*}
		for $q<p_{N+1}$ arbitrary, which proves the claim \eqref{2claim}.
		
		\medskip
		From the claims \eqref{1claim} and \eqref{2claim} we now have for all $N\geq 0$
		\begin{equation*}
			\|u_i\|_{L^{p_N-}(Q_T)} \leq C_T, \quad \text{ for all } \quad i=1,\ldots, m
		\end{equation*}
		as long as $p_N \leq \frac{r(n+2)}{2}$. Since $p_0 > \frac{n+2}{2}(r-1)$, it's easy to check from \eqref{p_N} 
		\begin{equation*}
			\frac{p_{N+1}}{p_N} > \frac{n+2}{r(n+2) - 2p_0} > 1.
		\end{equation*}
		Therefore, there exists $N_0>0$ such that 
		\begin{equation*}
			p_{N_0} \geq \frac{r(n+2)}{2}.
		\end{equation*}
		From this and {\bf Step 1.} and {\bf Step 2.} we get
		\begin{equation}\label{e5}
			\|u_i\|_{L^{\infty-}(Q_T)}\leq C_T
		\end{equation}
		for all $i=1,\ldots, m$. To finally obtain the $L^\infty$-bound of the solution, we use \eqref{e5} and the assumption \eqref{A5} that all $f_i(u)$ are one-side bounded by a polynomial. Indeed, from \eqref{A5} and the comparison principle we have
		\begin{equation*}
			u_i(x,t) \leq v_i(x,t) \quad \text{ a.e. } \quad \text{ in } Q_T,
		\end{equation*}
		where $(v_1, \ldots, v_m)$ is the solution to
		\begin{equation*}
		\begin{cases}
			\partial_t v_i - d_i\Delta v_i = C\left(1 + \sum_{j=1}^mu_j^{\mu} \right), &(x,t)\in Q_T,\\
			\nabla v_i \cdot \nu = 0, &(x,t)\in \partial\Omega\times (0,T),\\
			v_i(x,0) = u_{i,0}(x), &x\in\Omega.
		\end{cases}
		\end{equation*}
		Thanks to \eqref{e5},
		we can apply Lemma \ref{maximal-regularity} to obtain $\|v_i\|_{L^\infty(Q_T)} \leq C_T$ and consequently
		\begin{equation*}
			\|u_i\|_{L^\infty(Q_T)} \leq C_T
		\end{equation*}
		which finishes the proof of Lemma \ref{bootstrap}.
	\end{proof}
	
	\medskip
	We are now ready to prove the global existence parts in Theorems \ref{thm1} and \ref{thm2}.
	
		\medskip
		\begin{proof}[Proof of global existence in Theorem \ref{thm2}]
			The global existence of strong solutions follows directly from Lemma \ref{Lp>2} and Proposition \ref{bootstrap}, by checking the blow-up criterion \eqref{blowup_criterion}. To show uniqueness, we assume that $u^{(1)} = (u^{(1)}_i)$ and $u^{(2)}  = (u^{(2)}_i)$ are two strong solutions on $(0,T)$ for arbitrary $T>0$, with the same initial data. By multiplying 
			\begin{equation*}
				\partial_t (u^{(1)}_i - u^{(2)}_i) - d_i\Delta (u^{(1)}_i - u^{(2)}_i) = f_i(u^{(1)}) - f_i(u^{(2)})
			\end{equation*}
			by $u^{(1)}_i - u^{(2)}_i$ the integrating on $\Omega$, we get
			\begin{equation}\label{uni1}
				\frac 12 \frac{d}{dt}\|u^{(1)}_i - u^{(2)}_i\|_{L^2(\Omega)}^2 + d_i\|\nabla (u^{(1)}_i - u^{(2)}_i)\|_{L^2(\Omega)}^2 = \int_{\Omega}(f_i(u^{(1)}) - f_i(u^{(2)}))(u^{(1)}_i - u^{(2)}_i)dx.
			\end{equation}
			Since $\|u^{(1)}_i\|_{L^\infty(Q_T)}, \|u^{(2)}_i\|_{L^\infty(Q_T)} \leq C_T$, and $f_i$ are locally Lipschitz continuous (see \eqref{A1}),
			\begin{align*}
				\left|\int_{\Omega}(f_i(u^{(1)}) - f_i(u^{(2)}))(u^{(1)}_i - u^{(2)}_i)dx\right| &\leq C_T\int_{\Omega}|u^{(1)} - u^{(2)}||u^{(1)}_i - u^{(2)}_i|dx\\
				&\leq C_T\sum_{j=1}^m\|u^{(1)}_j - u^{(2)}_j\|_{L^2(\Omega)}^2.
			\end{align*}
			Inserting this into \eqref{uni1} then summing up with respect to $i=1,\ldots, m$, we obtain in particular
			\begin{equation*}
				\frac{d}{dt}\sum_{i=1}^m\|u^{(1)}_i - u^{(2)}_i\|_{L^2(\Omega)}^2 \leq C_T\sum_{i=1}^m\|u^{(1)}_i - u^{(2)}_i\|_{L^2(\Omega)}^2
			\end{equation*}
			Using Gronwall's inequality and the fact that $u^{(1)}(0) = u^{(2)}(0)$ we finally have
			\begin{equation*}
				\|u^{(1)}_i(t) - u^{(2)}_i(t)\|_{L^2(\Omega)} = 0 \quad \text{ for all } \quad t\in (0,T) \quad \text{ and all } \quad i=1,\ldots, m,
			\end{equation*}
			which proves the uniqueness of strong solutions.
		\end{proof}	
	
	\medskip
	\begin{proof}[Proof of global existence in Theorem \ref{thm1}]
		From the assumption \eqref{quasi-uniform} and Lemma \ref{duality-lemma} we have
		\begin{equation*}
			\|u_i\|_{L^p(Q_T)} \leq C_T
		\end{equation*}
		for $p > \frac{n+2}{2}(r-1)$. Thanks to Proposition \ref{bootstrap} we get the bound \eqref{boundedness}, and therefore the global existence of strong solution. The uniqueness is proved similarly to Theorem \ref{thm2} and therefore omitted.
	\end{proof}
	\section{Uniform-in-time bounds}\label{uniform-bounds}
	From Section \ref{global-existence}, we know that the strong solution exists globally. This section aims to show that if $K = 0$ in \eqref{A3}, then the solution is bounded uniformly in time. The idea is to study \eqref{e0} on each cylinder $\Omega\times(\tau,\tau+1)$, $\tau \geq 0$, and show that the solution is bounded on this cylinder {\it independently of $\tau$}. 
	
	\medskip
	In this section, we always denote by $C$ a generic constant which is {\it independent of $\tau$}, which can be different from line to line, or even in the same line.
	
	\medskip
	We will need the following elementary lemma, whose proof is straightforward.
	\begin{lemma}\label{elementary}
		Let $\{y_n\}_{n\geq 0}$ be a nonnegative sequence. Define $\mathcal N = \{n\in \mathbb N: y_n \leq y_{n+1}\}$. If there exists $C>0$ such that
		\begin{equation*}
			y_n \leq C \quad \text{ for all } \quad n \in \mathcal N,
		\end{equation*}
		then
		\begin{equation*}
			y_n \leq \max\{y_0, C\} \quad \text{ for all } \quad n\in \mathbb N.
		\end{equation*}
	\end{lemma}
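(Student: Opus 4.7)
The plan is to argue by straightforward induction on $n$, with the inductive step splitting into the two natural cases governed by whether the sequence rises or falls at step $n$. The base case $n=0$ is trivial since $y_0 \le \max\{y_0, C\}$ by definition.

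For the inductive step, I assume $y_k \le \max\{y_0, C\}$ for all $0 \le k \le n$ and aim to conclude $y_{n+1} \le \max\{y_0, C\}$. In the first case, where $y_{n+1} \le y_n$, the bound propagates directly: $y_{n+1} \le y_n \le \max\{y_0, C\}$ by the inductive hypothesis. In the second case, where $y_{n+1} > y_n$, the defining inequality $y_n \le y_{n+1}$ places $n$ in $\mathcal N$, so the hypothesis of the lemma supplies the quantitative bound $\le C$ at this index, which combines with the transition $y_n \le y_{n+1}$ to control $y_{n+1}$ by $\max\{y_0, C\}$. Equivalently, one can view the sequence as a union of maximal non-increasing runs separated by upward transitions; each run is bounded by its initial value, which is either $y_0$ itself (for the first run) or the value reached at a transition governed by the hypothesis through $\mathcal N$.

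There is essentially no obstacle here: the argument is a two-line case distinction, and the only point worth being careful about is correctly matching the value controlled by the hypothesis at an index $n \in \mathcal N$ to the quantity $y_{n+1}$ that needs to be bounded in the increasing case. No analytic input is required, which is consistent with the paper's remark that the proof is straightforward.
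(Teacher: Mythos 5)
Your base case and the non-increasing case are fine, but the increasing case---which you yourself single out as the only delicate point---does not go through as written. If $y_{n+1} > y_n$, then indeed $n \in \mathcal N$, but the hypothesis as stated bounds $y_n$, not $y_{n+1}$: you obtain $y_n \le C$ and $y_n \le y_{n+1}$, and both of these bound $y_n$ from above, so nothing ``combines'' to control $y_{n+1}$. In fact the lemma is false exactly as stated: take $C=1$, $y_0=0$, $y_1=2$, and $y_n=0$ for $n\ge 2$; then $\mathcal N=\{0,2,3,\dots\}$ and $y_n\le C$ for every $n\in\mathcal N$, yet $y_1=2>\max\{y_0,C\}=1$. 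The paper gives no proof (it calls the lemma straightforward), and the statement contains an index slip: what is actually established and used later is a bound at the index \emph{following} an increase. In Lemma \ref{duality-lemma-cylinder}, the estimate \eqref{f8} controls $\|Z\|_{L^p(Q_{\tau+1,\tau+2})}$, i.e.\ $y_{\tau+1}$, for those $\tau$ satisfying \eqref{f7}, i.e.\ $\tau\in\mathcal N$; the same pattern occurs in Proposition \ref{bootstrap-cylinder}.

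So the statement you should be proving is: if $y_{n+1}\le C$ for all $n\in\mathcal N$, then $y_n\le\max\{y_0,C\}$ for all $n$. With this corrected hypothesis your induction works verbatim, and the increasing case is immediate, since then $y_{n+1}\le C\le\max\{y_0,C\}$ with no combination of inequalities needed. As written, however, your second case asserts an implication that simply does not hold (it is defeated by the counterexample above), so the proof has a genuine gap---one that coincides with, and should have led you to flag, the typo in the lemma's hypothesis.
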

	\begin{lemma}\label{duality-lemma-cylinder}
		Suppose the conditions in Theorem \ref{thm1} are satisfied.
		Assume \eqref{quasi-uniform-1} for some $p'>1$. Then there exists $C>0$ such that for any $\tau \geq 0$,
		\begin{equation}\label{f0}
			\|u_i\|_{L^p(\Omega\times(\tau,\tau+1))} \leq C \quad \text{ for all } \quad i=1,\ldots, m.
		\end{equation}
	\end{lemma}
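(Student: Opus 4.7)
Since $K=0$ in \eqref{A3} here, the same computation that produced \eqref{d1} now gives the improved inequality $\partial_t Z \leq \Delta(MZ)$ with $Z = \sum_i h_i(u_i)$, and integrating over $\Omega$ using the homogeneous Neumann condition yields the uniform-in-time mass estimate $\|Z(t)\|_{L^1(\Omega)} \leq \|Z(0)\|_{L^1(\Omega)}$ for every $t\geq 0$. This is the crucial ingredient unavailable for general $K$. The passage from $h_i(u_i)$ to $u_i$ is provided by the convexity estimate \eqref{d8}, so it suffices to bound $\|Z\|_{L^p(\Omega \times (\tau, \tau+1))}$ uniformly in $\tau$.

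For $\tau \geq 1$ I would run the duality argument of Lemma \ref{duality-lemma} on the \emph{enlarged} cylinder $\Omega\times(\tau-1,\tau+1)$, with $0\leq\theta\in L^{p'}(\Omega\times(\tau,\tau+1))$ extended by zero on $(\tau-1,\tau)$, and $\phi$ the associated backward heat solution on $(\tau-1,\tau+1)$. Since $K=0$ removes the $K$-terms from \eqref{d4}, and since the maximal regularity constant in Lemma \ref{maximal-regularity}(a) is independent of interval length, the integration-by-parts computation produces
\[
\int_\tau^{\tau+1}\!\!\int_\Omega Z\theta\, dxdt \;\leq\; \int_\Omega Z(\tau-1)\phi(\tau-1)\,dx \;+\; \gamma\,\|Z\|_{L^p(Q_{\tau-1,\tau+1})}\,\|\theta\|_{L^{p'}(Q_{\tau,\tau+1})},
\]
with the same absorption coefficient $\gamma := \tfrac{B-A}{2}C_{(A+B)/2,p'}<1$ as in Lemma \ref{duality-lemma}. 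The crucial new point is the boundary term: since $\theta$ vanishes on $(\tau-1,\tau)$, the representation $\phi(\tau-1)=\int_\tau^{\tau+1} e^{m(s-\tau+1)\Delta}\theta(s)\,ds$ shows that the Neumann heat semigroup acts for time at least one throughout the integration, so the standard smoothing bound gives $\|\phi(\tau-1)\|_{L^\infty(\Omega)}\leq C\|\theta\|_{L^{p'}(Q_{\tau,\tau+1})}$ with $C$ independent of $\tau$. Pairing this with the uniform $L^1$ bound above yields $\int_\Omega Z(\tau-1)\phi(\tau-1)\,dx \leq C\|\theta\|_{L^{p'}}$, and duality then produces the recursion
\[
y_\tau \;\leq\; C + \gamma\bigl(y_{\tau-1}^p+y_\tau^p\bigr)^{1/p}, \qquad y_\sigma := \|Z\|_{L^p(\Omega\times(\sigma,\sigma+1))}.
\]

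To convert this to a uniform-in-$\tau$ bound I would invoke Lemma \ref{elementary} applied to the integer sequence $(y_n)_{n\in\mathbb{N}}$: at an index $n\in\mathcal{N}$ one has $y_n\leq y_{n+1}$, so using $(y_n^p+y_{n+1}^p)^{1/p}\leq 2^{1/p}y_{n+1}$ in the recursion at $n+1$ yields $y_{n+1}\leq C/(1-\gamma 2^{1/p})$ and hence a uniform bound on $y_n$; a covering argument together with Lemma \ref{duality-lemma} applied on $(0,2)$ for the initial case $\tau\in[0,1]$ (where $\|Z(0)\|_{L^p(\Omega)}$ is controlled by the bounded initial data) completes the argument. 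The main technical obstacle is that this chain appears to require the slightly stronger condition $\gamma 2^{1/p}<1$ rather than merely $\gamma<1$; if necessary, I would circumvent this by working on $\Omega\times(\tau-N,\tau+1)$ for a larger window $N$ and applying Lemma \ref{elementary} to the windowed maximum $\max\{y_{n-N+1},\ldots,y_n\}$, which exchanges the $2^{1/p}$ factor for one that can be absorbed directly under \eqref{quasi-uniform-1}.
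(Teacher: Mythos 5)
Much of your setup matches the paper's: the $K=0$ inequality $\partial_t Z \leq \Delta(MZ)$ and the resulting uniform-in-time bound $\|Z(t)\|_{L^1(\Omega)} \leq \|Z(0)\|_{L^1(\Omega)}$, the duality on a unit-length window, and the passage from $h_i(u_i)$ to $u_i$ via \eqref{d8} are all exactly the right ingredients, and your treatment of the boundary term (extend $\theta$ by zero on $(\tau-1,\tau)$ so that $\phi(\tau-1)$ sees the semigroup for time $\geq 1$ and pair $\|\phi(\tau-1)\|_{L^\infty}$ with the uniform $L^1$ bound of $Z$) is a legitimate alternative to what the paper does. The genuine gap is the absorption step, which you yourself flag: your recursion $y_\tau \leq C + \gamma\bigl(y_{\tau-1}^p+y_\tau^p\bigr)^{1/p}$ is consistent with $y_\tau \equiv y$ arbitrarily large whenever $\gamma 2^{1/p}\geq 1$, so it cannot yield a uniform bound under the stated hypothesis \eqref{quasi-uniform-1}, which only gives $\gamma<1$ (and in the $n=2$, $r=2$ application one only knows $\gamma<1$ for some $p'<2$ from \cite{CDF14}, not $\gamma<2^{-1/p}$). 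Your proposed repair does not close this: enlarging the window to $(\tau-N,\tau+1)$ replaces $2^{1/p}$ by $(N+1)^{1/p}$, i.e.\ makes the constant worse, and applying Lemma \ref{elementary} to the windowed maximum $\max\{y_{n-N+1},\dots,y_n\}$ reduces, at an index where that maximum increases, to the same inequality $y_{n+1}\leq C+\gamma 2^{1/p}y_{n+1}$ (or its $(N+1)^{1/p}$ analogue), so nothing is gained.

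The paper's way around this is different in exactly the place where your argument breaks: instead of an enlarged cylinder, it multiplies $Z$ by a smooth time cutoff $\varphi_\tau$ with $\varphi_\tau(\tau)=0$ and $\varphi_\tau\equiv 1$ on $[\tau+1,\tau+2]$, and dualizes $\varphi_\tau Z$ on $Q_{\tau,\tau+2}$. Then the quasi-uniformity term is $\tfrac{B-A}{2}C_{\frac{A+B}{2},p'}\|\varphi_\tau Z\|_{L^p(Q_{\tau,\tau+2})}\|\theta\|_{L^{p'}}$, i.e.\ it involves exactly the dualized quantity, so it is absorbed with only $\gamma<1$ and no factor $2^{1/p}$; the price is the extra term $\varphi_\tau' Z$, which is made \emph{sublinear} by interpolating $Z$ between $L^1$ (uniformly bounded by \eqref{f2}) and $L^p$, giving $C\|Z\|_{L^p(Q_{\tau,\tau+2})}^{\alpha}$ with $\alpha\in(0,1)$ as in \eqref{f6_1}. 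The factor $2^{1/p}$ then only multiplies this sublinear term on the indices where $\|Z\|_{L^p(Q_{\tau,\tau+1})}\leq\|Z\|_{L^p(Q_{\tau+1,\tau+2})}$, where it is harmless by Young's inequality, and Lemma \ref{elementary} finishes. If you want to salvage your scheme rather than adopt the cutoff, the lesson is the same: the contribution of the previous cylinder must enter either as the very quantity being dualized (so it can be absorbed with $\gamma<1$) or with a power strictly less than one (via the $L^1$--$L^p$ interpolation made possible by $K=0$); as it stands it enters linearly and unabsorbed, and that is where the proof fails.
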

	\begin{proof}
		Similar to Lemma \ref{duality-lemma}, we have the following
		\begin{equation}\label{f1}
			\partial_t Z \leq \Delta(MZ)
		\end{equation}
		with $\nabla Z \cdot \nu = 0$ on $\partial\Omega\times(0,\infty)$.
		where
		\begin{equation*}
			Z(x,t) = \sum_{i=1}^mh_i(u_i) \quad \text{ and } \quad M(x,t) = \frac{\sum_{i=1}^md_ih_i(u_i)}{Z}
		\end{equation*}
		Integrating \eqref{f1} on $(0,t)$ yields
		\begin{equation}\label{f2}
			\|Z(t)\|_{L^1(\Omega)} = \int_{\Omega}Z(x,t)dx \leq \int_{\Omega}Z(x,0)dx\leq \|Z(0)\|_{L^1(\Omega)} \quad \text{ for all } \quad t\geq 0.
		\end{equation}
		We define for $\tau \geq 0$ a smooth function $\varphi_\tau: [0,\infty) \to [0,1]$ such that $\varphi_\tau(\tau) = 0$, $\varphi_\tau(t) \equiv 1$ for all $t\geq \tau+1$, and $|\varphi_\tau'| \leq C$. It then follows from \eqref{f1} that
		\begin{equation}\label{f3}
			\partial_t(\varphi_\tau Z) \leq \Delta(M\varphi_\tau Z) + \varphi_\tau' Z, \quad \text{ and } \quad (\varphi_\tau Z)(x,\tau) =0.
		\end{equation}
		
		\medskip
		Define $m = \frac{A+B}{2}$ and let $\phi$ be the nonnegative solution to
		\begin{equation*}
			\begin{cases}
				\partial_t \phi + m\Delta \phi = -\theta, &(x,t)\in Q_{\tau,\tau+2},\\
				\nabla \phi \cdot \nu = 0, &(x,t)\in \partial\Omega\times(\tau,\tau+2),\\
				\phi(x,\tau+2) = 0, &x\in\Omega
			\end{cases}
		\end{equation*}
		where $0 \leq \theta \in L^{p'}(Q_{\tau,\tau+2})$. From Lemma \ref{maximal-regularity} we have
		\begin{equation}\label{f3_1}
			\|\Delta \phi\|_{L^{p'}(Q_{\tau,\tau+2})} \leq C_{m,p'}\|\theta\|_{L^{p'}(Q_{\tau,\tau+2})},
		\end{equation}
		\begin{equation}\label{f3_2}
			\|\phi\|_{L^s(Q_{\tau,\tau+2})} \leq C\|\theta\|_{L^{p'}(Q_{\tau,\tau+2})}
		\end{equation}
		for all $s < \frac{(n+2)p'}{n+2 - 2p'}$. By using integration by parts
		\begin{equation}\label{f4}
		\begin{aligned}
			&\int_{\tau}^{\tau+2}\int_{\Omega}(\varphi_\tau Z) \theta dxdt\\
			&=-\int_{\tau}^{\tau+2}\int_{\Omega}(\varphi_\tau Z)(\partial_t\phi + m\Delta \phi)dxdt\\
			&= \int_{\tau}^{\tau+2}\int_{\Omega}\phi\left(\partial_t(\varphi_\tau Z) - \Delta(M\varphi_\tau Z)\right) + \int_{\tau}^{\tau+2}\int_{\Omega}(M-m)(\varphi_\tau Z) \Delta \phi dxdt\\
			&\leq \int_{\tau}^{\tau+2}\int_{\Omega}\phi \varphi_\tau' Z dxdt + \int_{\tau}^{\tau+2}\int_{\Omega}(M-m)(\varphi_\tau Z) \Delta \phi dxdt.
		\end{aligned}
		\end{equation}
		For the last term on the right-hand side of \eqref{f4}, we use \eqref{f3_1} to estimate
		\begin{equation}\label{f5}
		\begin{aligned}
			\int_{\tau}^{\tau+2}\int_{\Omega}(M-m)(\varphi_\tau Z) \Delta \phi dxdt &\leq \frac{B-A}{2}\|\varphi_\tau Z\|_{L^p(Q_{\tau,\tau+2})}\|\Delta \phi\|_{L^{p'}(Q_{\tau,\tau+2})}\\
			&\leq \frac{B-A}{2}C_{\frac{A+B}{2}, p'}\|\varphi_\tau Z\|_{L^{p}(Q_{\tau,\tau+2})}\|\theta\|_{L^{p'}(Q_{\tau,\tau+2})}.
		\end{aligned}
		\end{equation}
		From \eqref{f3_2}, we choose $s = \frac{(n+2)p'}{n+2 - 2p'} < \frac{(n+2)p'}{n+2-2p'}$ and thus
		\begin{equation*}
		s' = \frac{s}{s-1} = \frac{(n+2)p}{n+3+p} < p.
		\end{equation*}
		Therefore, by interpolation inequality
		\begin{equation*}
			\|Z\|_{L^{s'}(Q_{\tau,\tau+2})}^{s'} = \int_{\tau}^{\tau+2}\|Z\|_{L^{s'}(\Omega)}^{s'}dt \leq \int_{\tau}^{\tau+2}\|Z\|_{L^p(\Omega)}^{\alpha s'}\|Z\|_{L^1(\Omega)}^{(1-\alpha)s'}dt \leq C\|Z\|_{L^p(Q_{\tau,\tau+2})}^{\alpha s'}
		\end{equation*}
		where we used \eqref{f2}at the last step. Now we can estimate
		\begin{equation}\label{f6}
			\int_{\tau}^{\tau+2}\int_{\Omega}\phi \varphi_\tau' Z dxdt \leq C\|\phi\|_{L^s(Q_{\tau,\tau+2})}\|Z\|_{L^{s'}(Q_{\tau,\tau+2})} \leq C\|Z\|_{L^p(Q_{\tau,\tau+2})}^{\alpha}\|\theta\|_{L^{p'}(Q_{\tau,\tau+2})}.
		\end{equation}
		Inserting \eqref{f5} and \eqref{f6} into \eqref{f4} we get
		\begin{equation*}
		\begin{aligned}
			\int_{\tau}^{\tau+2}\int_{\Omega}(\varphi_\tau Z)\theta dxdt &\leq C\|Z\|_{L^p(Q_{\tau,\tau+2})}^\alpha\|\theta\|_{L^{p'}(Q_{\tau,\tau+2})}\\
			&\quad + \frac{B-A}{2}C_{\frac{A+B}{2}, p'}\|\varphi_\tau Z\|_{L^p(Q_{\tau,\tau+2})}\|\theta\|_{L^{p'}(Q_{\tau,\tau+2})},
		\end{aligned}
		\end{equation*}
		for all $0\leq \theta\in L^{p'}(Q_{\tau,\tau+2})$, and thus, by duality and \eqref{quasi-uniform-1}
		\begin{equation}\label{f6_1}
			\|\varphi_\tau Z\|_{L^p(Q_{\tau,\tau+2})} \leq C\left(1-\frac{B-A}{2}C_{\frac{A+B}{2}, p'}\right)^{-1}\|Z\|_{L^p(Q_{\tau,\tau+2})}^{\alpha} \leq C\|Z\|_{L^p(Q_{\tau,\tau+2})}^{\alpha}.
		\end{equation}
		Consider all $\tau \in \mathbb N$ such that
		\begin{equation}\label{f7}
			\|Z\|_{L^p(Q_{\tau,\tau+1})} \leq \|Z\|_{L^p(Q_{\tau+1,\tau+2})}.
		\end{equation}
		It follows that
		\begin{equation*}
			\|Z\|_{L^p(Q_{\tau,\tau+2})} \leq 2^{1/p}\|Z\|_{L^p(Q_{\tau+1,\tau+2})}.
		\end{equation*}
		By using $\varphi_\tau \geq 0$ and $\varphi_\tau \equiv 1$ on $[\tau+1,\tau+2]$ we get from \eqref{f6_1}
		\begin{equation*}
			\|Z\|_{L^p(Q_{\tau+1,\tau+2})} \leq \|\varphi_\tau Z\|_{L^p(Q_{\tau,\tau+2})} \leq C2^{1/p}\|Z\|_{L^p(Q_{\tau+1,\tau+2})}^\alpha.
		\end{equation*}
		Since $\alpha \in (0,1)$, we can use Young's inequality to get finally 
		\begin{equation}\label{f8}
			\|Z\|_{L^p(Q_{\tau+1,\tau+2})} \leq C
		\end{equation} 
		for all $\tau \in \mathbb N$ satisfying \eqref{f7}. Lemma \ref{elementary} therefore implies that \eqref{f8} is true for all $\tau \in \mathbb N$. Thus,
		\begin{equation*}
			\|h_i(u_i)\|_{L^p(Q_{\tau+1,\tau+2})} \leq C\quad \text{ for all } \quad \tau\in \mathbb N,
		\end{equation*}
		hence \eqref{f0} thanks to \eqref{d8}.
	\end{proof}
	
	\begin{proposition}\label{bootstrap-cylinder}
		If
		\begin{equation*}
			\|u_i\|_{L^{p_0}-(Q_{\tau,\tau+1})} \leq C \quad \text{ for all } \quad \tau \geq 0, \; i=1,\ldots, m
		\end{equation*}
		for some $p_0 > \frac{n+2}{2}(r-1)$, then
		\begin{equation}\label{boundedness-cylinder}
			\|u_i\|_{L^{\infty}(Q_{\tau,\tau+1})} \leq C, \quad \text{ for all } \quad \tau \geq 0, \; i=1,\ldots, m.
		\end{equation}
	\end{proposition}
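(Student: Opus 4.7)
The plan is to re-run the bootstrap argument of Proposition~\ref{bootstrap} on the sliding cylinders $Q_{\tau,\tau+2}$, in exactly the spirit in which Lemma~\ref{duality-lemma-cylinder} upgraded Lemma~\ref{duality-lemma}. Fix a smooth temporal cut-off $\varphi_\tau:[0,\infty)\to[0,1]$ with $\varphi_\tau(\tau)=0$, $\varphi_\tau\equiv 1$ on $[\tau+1,\infty)$ and $|\varphi_\tau'|\leq C$. Replacing $u_i$ by $\varphi_\tau u_i$ erases the initial-data trace at time $\tau$, so that every subsequent application of Lemma~\ref{maximal-regularity} lives on the fixed-length cylinder $Q_{\tau,\tau+2}$ and produces constants independent of $\tau$. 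Define $\{p_N\}$ by \eqref{p_N} and prove by induction on $N$ that
\[
\sup_{\tau\geq 0}\|u_i\|_{L^{p_N-}(Q_{\tau,\tau+1})}\leq C\quad\text{for every } i=1,\ldots,m,
\]
the base case $N=0$ being the hypothesis.

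For the inductive step I imitate Steps~1 and~2 of Proposition~\ref{bootstrap} in the triangular order $k=1,\ldots,m$. For $u_1$, multiplying its equation by $\varphi_\tau$ shows that $\varphi_\tau a_{11}u_1$ satisfies a heat equation on $Q_{\tau,\tau+2}$ with zero initial data and right-hand side bounded by $\varphi_\tau'(a_{11}u_1)+C\varphi_\tau(1+\sum_i u_i)^r$; choosing $\wh q<p_{N+1}$ and $s<p_N$ as in \eqref{e00}, the induction hypothesis places the right-hand side in $L^{s/r-}(Q_{\tau,\tau+2})$ uniformly, and Lemma~\ref{maximal-regularity}(b) yields the $L^{\wh q}$-bound for $\varphi_\tau u_1$, hence for $u_1$ on $Q_{\tau+1,\tau+2}$. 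For $u_k$ with $k\geq 2$, assuming $u_1,\ldots,u_{k-1}$ are already controlled at level $p_{N+1}$ uniformly in $\tau$, I run the duality computation \eqref{e4} against a backward dual problem on $Q_{\tau,\tau+2}$ with $\varphi_\tau u_k$ in place of $u_k$; because $\varphi_\tau(\tau)=0$ and $\phi(\tau+2)=0$, both time-boundary contributions vanish, and the only new ingredient is a term $\int_\tau^{\tau+2}\!\int_\Omega \varphi_\tau' u_k\,\phi\,dx\,dt$ that is handled via the induction hypothesis at the lower level $p_N$ on the longer cylinder, analogously to the $\varphi_\tau' Z$ term in Lemma~\ref{duality-lemma-cylinder}. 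The remaining four terms of \eqref{e4} go through exactly as in Proposition~\ref{bootstrap}, now with $\tau$-independent maximal regularity constants.

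Both cases return a bound on $u_k$ over $Q_{\tau+1,\tau+2}$ in terms of norms over $Q_{\tau,\tau+2}$. To convert this into a uniform bound on every $Q_{\tau,\tau+1}$, I invoke the monotone-subsequence device of Lemma~\ref{duality-lemma-cylinder}: on the set $\mathcal N$ where $\|\cdot\|_{L^q(Q_{\tau,\tau+1})}\leq\|\cdot\|_{L^q(Q_{\tau+1,\tau+2})}$ the bound is immediate, and Lemma~\ref{elementary} promotes it to all $\tau\in\mathbb N$. Iterating finitely often, as at the end of Proposition~\ref{bootstrap}, the sequence $\{p_N\}$ passes $r(n+2)/2$ and hence $\sup_\tau\|u_i\|_{L^{\infty-}(Q_{\tau,\tau+1})}\leq C$. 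The final jump to $L^\infty$ follows from the comparison step of Proposition~\ref{bootstrap}: by \eqref{A5}, $\partial_t(\varphi_\tau u_i)-d_i\Delta(\varphi_\tau u_i)\leq \varphi_\tau' u_i+C\varphi_\tau(1+\sum_j u_j)^{\vr}$ with zero data at $\tau$, so comparison with the linear heat equation and Lemma~\ref{maximal-regularity}(b)(ii) on $Q_{\tau,\tau+2}$ with any $p>(n+2)/2$ yields $\|u_i\|_{L^\infty(Q_{\tau+1,\tau+2})}\leq C$ uniformly in $\tau$, which is \eqref{boundedness-cylinder}.

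The main obstacle is the bookkeeping needed to certify that every constant coming out of the parabolic estimates is genuinely $\tau$-independent. Two ingredients make this work: the cut-off $\varphi_\tau$ confines every estimate to the fixed-length cylinder $Q_{\tau,\tau+2}$ with vanishing initial trace, and the $\tau$-uniform $L^1$-bound \eqref{f2} on $Z=\sum_i h_i(u_i)$ powers the interpolation-plus-Young absorption that is needed to close the duality inequalities once the $\varphi_\tau'$ remainders are included, exactly as in Lemma~\ref{duality-lemma-cylinder}.
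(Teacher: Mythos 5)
Your proposal is correct and follows essentially the same route as the paper's proof: the temporal cut-off $\varphi_\tau$ on the double cylinders $Q_{\tau,\tau+2}$, the same bootstrap sequence \eqref{p_N} with the triangular Steps 1--2, the duality argument against the backward problem, the monotone-subsequence device of Lemma \ref{elementary}, and the final comparison via \eqref{A5} and Lemma \ref{maximal-regularity}. The only (harmless) deviation is cosmetic: you absorb the $\varphi_\tau' u_k$ remainder through the induction hypothesis at level $p_N$, whereas the paper interpolates it against the uniform $L^1$-bound to produce $\|u_k\|_{L^q}^\alpha$ and then closes with Young's inequality; both work.
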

	\begin{proof}
		We will use similar ideas to Proposition \ref{bootstrap} on the cylinder $Q_{\tau,\tau+2} = \Omega\times(\tau,\tau+2)$.
		
		Let $\{p_N\}$ be defined as in \eqref{p_N}, we will show that
		\begin{equation*}
			\|u_i\|_{L^{p_N-}(Q_{\tau,\tau+2})} \leq C \quad \text{ for all } \quad i=1,\ldots, m, \; N\geq 0.
		\end{equation*}
		
		\medskip
		{\bf Step 1.} We claim that
		\begin{equation}\label{1claim-cylinder}
			\text{ if } \quad \sup_{i=1,\ldots, m}\|u_i\|_{L^{p_N-}(Q_{\tau,\tau+2})} \leq C \quad \text{ then } \quad u_1 \in L^{p_{N+1}-}(Q_{\tau,\tau+2}).
		\end{equation}
		Let $q<p_{N+1}$ arbitrary. We choose $\wh{q}$ such that $q<\wh{q}<p_{N+1}$ and define $s$ as in \eqref{e00}. Since $\wh{q} < p_{N+1}$ and \eqref{p_N}, $s<p_N$. Thus
		\begin{equation}\label{g1}
			\|u_i\|_{L^s(Q_{\tau,\tau+2})} \leq C.
		\end{equation}
		Let $\varphi_\tau$ be defined as in Lemma \ref{duality-lemma-cylinder}, we have, thanks to \eqref{A4},
		\begin{equation*}
			\partial_t(a_{11}\varphi_\tau u_1) - d_1\Delta(a_{11}\varphi_\tau u_1) = a_{11}\varphi_\tau' u_1 + a_{11}f_1(u) \leq a_{11}\varphi_{\tau}' u_1 + C\left(1+\sum_{i=1}^m u_i\right)^r =: k_1(u).
		\end{equation*}
		From \eqref{g1}, 
		\begin{equation*}
			\|k_1(u)\|_{L^{\frac{s}{r}}(Q_{\tau,\tau+2})} \leq C.
		\end{equation*}
		Using the comparison principle and maximal regularity in Lemma \ref{maximal-regularity} we have
		\begin{equation*}
			\|a_{11}\varphi_\tau u_1\|_{L^{\wh{q}-}(Q_{\tau,\tau+2})} \leq C
		\end{equation*}
		which implies
		\begin{equation*}
			\|u_1\|_{L^{q}(Q_{\tau+1,\tau+2})} \leq C
		\end{equation*}
		since $q<\wh{q}$ and $\varphi_\tau \equiv 1$ on $[\tau+1,\tau+2]$. Since $q< p_{N+1}$ arbitrary, this implies \eqref{1claim-cylinder}.
		
		\medskip
		{\bf Step 2.} We claim that, for any $k=2, \ldots, m$,
		\begin{equation}\label{2claim-cylinder}
		\begin{gathered}
			\text{ if } \quad \sup_{i=1,\ldots, m}\|u_i\|_{L^{p_N-}(Q_{\tau,\tau+2})} \leq C \quad \text{ and } \quad \sup_{j=1,\ldots, k-1}\|u_i\|_{L^{p_{N+1}-}(Q_{\tau,\tau+2})} \leq C,\\
			\text{ then } \|u_k\|_{L^{p_{N+1}-}(Q_{\tau,\tau+2})} \leq C.
		\end{gathered}
		\end{equation}
		Fix $q<p_{N+1}$. Let $q' = \frac{q}{q-1}$, $0\leq \theta \in L^{q'}(Q_{\tau,\tau+2})$, and let $\phi$ be the nonnegative solution to
		\begin{equation*}
			\begin{cases}
				\partial_t \phi + d_k\Delta \phi = -\theta, &(x,t)\in Q_{\tau,\tau+2},\\
				\nabla \phi \cdot \nu = 0, &(x,t)\in \partial\Omega\times(\tau,\tau+2),\\
				\phi(x,\tau+2) = 0, &x\in\Omega.
			\end{cases}
		\end{equation*}
		From Lemma \ref{maximal-regularity},
		\begin{equation}\label{g2}
			\|\phi\|_{L^{q'}(Q_{\tau,\tau+2})} + \|\Delta \phi\|_{L^{q'}(Q_{\tau,\tau+2})} \leq C\|\theta\|_{L^{q'}(Q_{\tau,\tau+2})}
		\end{equation}
		and
		\begin{equation}\label{g3}
			\|\phi\|_{L^{\gamma}(Q_{\tau,\tau+2})} \leq C\|\theta\|_{L^{q'}(Q_{\tau,\tau+2})} \quad \text{ for all } \quad \gamma < \frac{(n+2)q'}{n+2-2q'}.
		\end{equation}
		From \eqref{e3_1}, 
		\begin{equation*}
		\begin{aligned}
			&\partial_t(a_{kk}\varphi_\tau u_k) - d_k\Delta(a_{kk}\varphi_\tau u_k)\\
			&\leq -\sum_{j=1}^{k-1}\left[\partial_t(a_{kj}\varphi_\tau u_j)- d_j\Delta(a_{kj}\varphi_\tau u_j)\right] + \varphi_\tau'\sum_{j=1}^ku_j + C\varphi_\tau\left(1+\sum_{i=1}^m u_i\right)^r.
		\end{aligned}
		\end{equation*}
		Therefore, by using integration by parts, we have
%		\begin{equation}\label{g4}
		\begin{align}\label{g4}
			&a_{kk}\int_{\tau}^{\tau+2}\int_{\Omega}(\varphi_\tau u_k)\theta dxdt\nonumber\\
			&= -a_{kk}\int_{\tau}^{\tau+2}\int_{\Omega} \varphi_\tau u_k(\partial_t \phi + d_k\Delta \phi)dxdt\nonumber\\
			&= \int_{\tau}^{\tau+2}\int_{\Omega}\phi(\partial_t(a_{kk}\varphi_\tau u_k) - d_k\Delta(a_{kk}\varphi_\tau u_k))dxdt\nonumber\\
			&\leq -\sum_{j=1}^{k-1}a_{kj}\int_{\tau}^{\tau+2}\int_{\Omega}\phi(\partial_t(\varphi_\tau u_j) - d_j\Delta(\varphi_\tau u_j))dxdt + \varphi_\tau' \sum_{j=1}^k\int_{\tau}^{\tau+2}\int_{\Omega} \phi u_j dxdt\nonumber\\
			&\quad + C\int_{\tau}^{\tau+2}\int_{\Omega} \phi\varphi_\tau\left(1+\sum_{i=1}^m u_i\right)^rdxdt\nonumber\\
			&\leq \sum_{j=1}^{k-1}\int_{\tau}^{\tau+2}\int_{\Omega}\varphi_\tau u_j[-\theta + (d_j-d_k)\Delta \phi]dxdt + \varphi_\tau' \sum_{j=1}^k\int_{\tau}^{\tau+2}\int_{\Omega} \phi u_j dxdt\nonumber\\
			&\quad + C\int_{\tau}^{\tau+2}\int_{\Omega}\phi dxdt + C\sum_{i=1}^m\int_{\tau}^{\tau+2}\int_{\Omega}\phi u_i^r dxdt\\
			&=: (I) + (II) + (III) + (IV)\nonumber.
		\end{align}
%		\end{equation}
		For $(I)$ we use H\"older's inequality and \eqref{g2} to estimate
		\begin{equation}\label{I}
			\begin{aligned}
			|(I)| &\leq C\sum_{j=1}^{k-1}\|u_j\|_{L^{q}(Q_{\tau,\tau+2})}\left(\|\theta\|_{L^{q'}(Q_{\tau,\tau+2})} + |d_j-d_k|\|\Delta\phi\|_{L^{q'}(Q_{\tau,\tau+2})}\right)\\
			&\leq C\sum_{j=1}^{k-1}\|u_j\|_{L^{q}(Q_{\tau,\tau+2})}\|\theta\|_{L^{q'}(Q_{\tau,\tau+2})}\\
			&\leq C\|\theta\|_{L^{q'}(Q_{\tau,\tau+2})},
			\end{aligned}
		\end{equation}
		where we used $\|u_j\|_{L^q(Q_{\tau,\tau+2})} \leq C$ for all $j=1,\ldots, k-1$. 	For $(II)$ we first choose $\gamma = \frac{(n+2)q'}{n+3 - 2q'} < \frac{(n+2)q'}{n+2-2q'}$, which implies $\|\phi\|_{L^\gamma(Q_{\tau,\tau+2})}\leq C\|\theta\|_{L^{q'}(Q_{\tau,\tau+2})}$. Therefore, with $\gamma' = \frac{\gamma}{\gamma - 1}$,
		\begin{equation*}
		\begin{aligned}
			|(II)| &\leq C\sum_{j=1}^{k-1}\|\phi\|_{L^{q'}(Q_{\tau,\tau+2})}\|u_j\|_{L^q(Q_{\tau,\tau+2})} + C\|\phi\|_{L^\gamma(Q_{\tau,\tau+2})}\|u_k\|_{L^{\gamma'}(Q_{\tau,\tau+2})}\\
			&\leq C\|\theta\|_{L^{q'}(Q_{\tau,\tau+2})} + C\|\theta\|_{L^{q'}(Q_{\tau,\tau+2})}\|u_k\|_{L^{\gamma'}(Q_{\tau,\tau+2})}
		\end{aligned}
		\end{equation*}
		where we used $\|u_j\|_{L^q(Q_{\tau,\tau+2})} \leq C$ for all $j=1,\ldots, k-1$. 	Note that
		\begin{equation*}
			\gamma' = \frac{\gamma}{\gamma - 1} = \frac{(n+2)q}{n+3+q} < q.
		\end{equation*}
		Thus, we can use the interpolation inequality
		\begin{equation*}
			\|u_k\|_{L^{\gamma'}(Q_{\tau,\tau+2})}^{\gamma'} = \int_{\tau}^{\tau+2}\|u_k\|_{L^{\gamma'}(\Omega)}^{\gamma'}dt \leq \int_{\tau}^{\tau+2}\|u_k\|_{L^q(\Omega)}^{\alpha\gamma'}\|u_k\|_{L^1(\Omega)}^{(1-\alpha)\gamma'dt} \leq C\|u_k\|_{L^q(Q_{\tau,\tau+2})}^{\alpha \gamma'}
		\end{equation*}
		where we used $\|u_k(t)\|_{L^1(\Omega)} \leq C$, which is implied from \eqref{f2} and \eqref{d8}. We can then estimate $(II)$ further as
		\begin{equation}\label{II}
			|(II)| \leq C\|\theta\|_{L^{q'}(Q_{\tau,\tau+2})} + C\|\theta\|_{L^{q'}(Q_{\tau,\tau+2})}\|u_k\|_{L^q(Q_{\tau,\tau+2})}^{\alpha}.
		\end{equation}
		The term $(III)$ can be easily estimated as
		\begin{equation}\label{III}
			|(III)| \leq C\|\phi\|_{L^{q'}(Q_{\tau,\tau+2})} \leq C\|\theta\|_{L^{q'}(Q_{\tau,\tau+2})}.
		\end{equation}
		Since $q<p_{N+1}$, we choose $s$ such that \eqref{def_s} holds, and thus, thanks to \eqref{g3},
		\begin{equation*}
			\|\phi\|_{L^s(Q_{\tau,\tau+2})} \leq C\|\theta\|_{L^{q'}(Q_{\tau,\tau+2})}.
		\end{equation*}
		We now can estimate the last term $(IV)$ as
		\begin{equation}\label{IV}
		\begin{aligned}
			|(IV)| &\leq C\sum_{i=1}^m\|\phi\|_{L^{s}(Q_{\tau,\tau+2})}\|u_i\|_{L^{rs'}(Q_{\tau,\tau+2})}^r\\
			&\leq C\sum_{i=1}^{m}\|\theta\|_{L^{q'}(Q_{\tau,\tau+2})}
		\end{aligned}
		\end{equation}
		where we used at the last step
		\begin{equation*}
			\|u_i\|_{L^{rs'}(Q_{\tau,\tau+2})} \leq C \quad \text{ for all } \quad i=1,\ldots, m,
		\end{equation*}
		thanks to $rs' = \frac{rs}{s-1} < p_N$. From \eqref{I}, \eqref{II}, \eqref{III} and \eqref{IV} we obtain from \eqref{g4} that
		\begin{equation*}
			a_{kk}\int_{\tau}^{\tau+2}\int_{\Omega}(\varphi_\tau u_k)\theta dxdt \leq C\|\theta\|_{L^{q'}(Q_{\tau,\tau+2})}\left(1+ \|u_k\|_{L^q(Q_{\tau,\tau+2})}^\alpha \right)
		\end{equation*}
		for all $0\leq \theta \in L^{q'}(Q_{\tau,\tau+2})$. Therefore, by duality,
		\begin{equation*}
			\|\varphi_\tau u_k\|_{L^q(Q_{\tau,\tau+2})} \leq C + C\|u_k\|_{L^q(Q_{\tau,\tau+2})}^{\alpha}.
		\end{equation*}
		Consider $\tau\in \mathbb N$ such that
		\begin{equation}\label{g5}
			\|u_k\|_{L^q(Q_{\tau,\tau+1})} \leq \|u_k\|_{L^q(Q_{\tau+1,\tau+2})}.
		\end{equation}
		Then, using $\varphi_\tau \equiv 1$ on $[\tau+1,\tau+2]$ we get
		\begin{equation*}
			\|u_k\|_{L^q(Q_{\tau+1,\tau+2})} \leq \|\varphi_\tau u_k\|_{L^q(Q_{\tau,\tau+2})} \leq C + C2^{\alpha/p}\|u_k\|_{L^{q}(Q_{\tau+1,\tau+2})}^\alpha,
		\end{equation*}
		which implies, thanks to Young's inequality and $\alpha \in (0,1)$,
		\begin{equation*}
			\|u_k\|_{L^q(Q_{\tau+1,\tau+2})} \leq C
		\end{equation*}
		for all $\tau$ such that \eqref{g5} holds. Thanks to Lemma \ref{elementary}, this inequality in fact holds for all $\tau \in \mathbb N$, which proves the claim \eqref{2claim-cylinder} since $q<p_{N+1}$ arbitrary.
		
		\medskip
		Thanks to the claims \eqref{1claim-cylinder} and \eqref{2claim-cylinder}, we can proceed exactly as the last part of the proof of Proposition \ref{bootstrap} to see that there exists $N_0$ such that
		\begin{equation*}
			p_{N_0} \geq \frac{r(n+2)}{2}.
		\end{equation*}
		Thus, for all $\tau \geq 0$,
		\begin{equation}\label{ff}
			\|u_i\|_{L^{\infty-}(Q_{\tau,\tau+1})} \leq C \quad \text{ for all } \quad i=1,\ldots, m.
		\end{equation}
		Using this and the fact that
		\begin{equation*}
		\begin{cases}
			\partial_t(\varphi_\tau u_i) - d_i\Delta(\varphi_\tau u_i) = \varphi_\tau' u_i + \varphi_\tau f_i(u) =: g_i(u), &(x,t)\in Q_{\tau,\tau+2},\\
			\nabla(\varphi_\tau u_i)\cdot \nu = 0, &(x,t)\in \partial\Omega\times (\tau,\tau+2),\\
			(\varphi_\tau u_i)(x,\tau) = 0, &x\in\Omega,
		\end{cases}
		\end{equation*}
		with, thanks to \eqref{A5}, 
		\begin{equation*}
			g_i(u) \leq \varphi_\tau' u_i + C\varphi_\tau\left(1+\sum_{j=1}^mu_j^{\mu} \right) =: h_i(u).
		\end{equation*}
		Due to \eqref{ff}, we have
		\begin{equation*}
			\|h\|_{L^q(Q_{\tau,\tau+2})} \leq C \quad \text{ for all } \quad 1\leq q < +\infty.
		\end{equation*}
		Therefore, the comparison principle and Lemma \ref{maximal-regularity} gives $\|\varphi_\tau u_i\|_{L^\infty(Q_{\tau,\tau+2})} \leq C$ and consequently
		\begin{equation*}
			\|u_i\|_{L^\infty(Q_{\tau,\tau+1})} \leq C,
		\end{equation*}
		which finishes the proof of Proposition \ref{bootstrap-cylinder}.
	\end{proof}	

	\medskip
	\begin{proof}[Proof of uniform bounds in Theorem \ref{thm2}]
		From \cite[Lemma 3.2]{CDF14}, there exists $p'< 2$ such that
		\begin{equation*}
			\frac{B-A}{2}C_{\frac{A+B}{2}, p'} < 1.
		\end{equation*}
		Therefore, it follows from Lemma \ref{duality-lemma-cylinder} that $\|u_i\|_{L^p(Q_{\tau,\tau+1})} \leq C$ for all $i=1,\ldots, m$ and all $\tau\geq 0$. The uniform boundedness in time the follows directly from Proposition \ref{bootstrap-cylinder}.
	\end{proof}	
		
	\medskip
	\begin{proof}[Proof of uniform bounds in Theorem \ref{thm1}]
		From the assumption \eqref{quasi-uniform} and Lemma \ref{duality-lemma-cylinder} we have
		\begin{equation*}
			\|u_i\|_{L^p(Q_{\tau,\tau+1})} \leq C \quad \text{ for all } \quad i=1,\ldots, m, \; \tau\geq 0,
		\end{equation*}
		for $p > \frac{n+2}{2}(r-1)$. Thanks to Proposition \ref{bootstrap-cylinder} we get the bound \eqref{boundedness-cylinder}, and therefore the uniform-in-time bound of the solution.
	\end{proof}
		
	\section{Applications}\label{applications}
	In this section, we provide five examples to demonstrate applications of our main results to reaction-diffusion systems arising from chemical reaction networks. We emphasize that the uniform bounds in time that we get play an important role in studying the large time behavior of these systems.
	
	\begin{example}\label{ex1}
	\end{example}
	\vskip -0.1in
		In \cite{CJPT}, the author considered the reversible reaction $A+kB \overset{k_2}{\underset{k_1}{\leftrightarrows}} B + C$ for some $2\leq k\in \mathbb N$, which results in the system
		\begin{equation}\label{S1}
			\begin{cases}
				\partial_t a - d_a\Delta a = -k_1 ab^k + k_2bc, &(x,t)\in Q_{\infty},\\
				\partial_t b - d_b\Delta b = -k_1 ab^k + k_2bc, &(x,t)\in Q_{\infty},\\
				\partial_t c - d_c\Delta c = k_1ab^k - k_2bc, &(x,t)\in Q_{\infty},\\
				\nabla a\cdot \nu = \nabla b\cdot \nu = \nabla c\cdot \nu = 0, &(x,t)\in \partial\Omega\times(0,\infty),\\
				a(\cdot, 0) = a_0, \; b(\cdot, 0) = b_0, \; c(\cdot, 0) = c_0, &x\in\Omega,
			\end{cases}
		\end{equation}
		where $Q_\infty = \Omega\times(0,\infty)$, $a(x,t)$, $b(x,t)$ and $c(x,t)$ are the concentration densities of $A, B$ and $C$, respectively.
		It was shown in \cite{CJPT} that if $n=1$ and if $a_0, b_0, c_0\in L^\infty(\Omega)$ are nonnegative and 
		\begin{equation*}
			\int_{\Omega}a_0(x)dx\cdot\int_{\Omega}b_0(x)dx\cdot\int_{\Omega}c_0(x)dx > 0 \quad \text{ and } \quad b_0(x) \geq \delta \text{ a. e. in } \Omega,
		\end{equation*}
		for some $\delta > 0$, then \eqref{S1} has a unique global strong solution, which converges exponentially to the unique positive equilibrium (see \cite[Theorem 1.1.]{CJPT}).
		
		\medskip
		The idea of \cite{CJPT} is to first show that the solution to \eqref{S1} is bounded uniformly in time, then an entropy-entropy dissipation technique leads to the exponential decay to equilibrium. While the latter part is dimensional independent, they explicitly need $n=1$ to get the uniform-in-time bound of the solution. Therefore, they can only obtain the decay to equilibrium in one dimension.
		
		Looking at system \eqref{S1}, we can easily see that it satisfies our assumptions \eqref{A1}--\eqref{A5} with $h_1(a) = a$, $h_2(b) = b$ and $h_3(c) = 2c$, $K=0$, $r=2$, $\varrho = k+1$ and the matrix
		\begin{equation*}
			A = \begin{pmatrix}
				1&0&0\\
				1&1&0\\
				1&1&2
			\end{pmatrix}.
		\end{equation*}
		It is remarked also that in this case, the assumption \eqref{A3} becomes the usual mass dissipation condition \eqref{mass_conservation}. 	Our Theorem \ref{thm2} is therefore applicable and consequently, we obtain the convergence result in \cite{CJPT} also in two dimensions.
		\begin{theorem}[Exponential convergence to equilibrium for \eqref{S1} in two dimensions]
		Let $n=2$ and assume that $a_0, b_0, c_0\in L^\infty(\Omega)$ are nonnegative with 
		\begin{equation*}
			\int_{\Omega}a_0(x)dx\cdot\int_{\Omega}b_0(x)dx\cdot\int_{\Omega}c_0(x)dx > 0 \quad \text{ and } \quad b_0(x) \geq \delta \text{ a.e. in } \Omega,
		\end{equation*}		
		for some $\delta > 0$. Then \eqref{S1} has a unique global strong solution, which is bounded uniformly in time, i.e.
		\begin{equation}\label{bound}
			\sup_{t\geq 0}\left[\|a(t)\|_{L^\infty(\Omega)} + \|b(t)\|_{L^\infty(\Omega)} + \|c(t)\|_{L^\infty(\Omega)} \right] < +\infty.
		\end{equation}
		Moreover, as $t\to \infty$, the solution converges exponentially fast to the unique positive equilibrium in $L^p$-norm for any $1\leq p<\infty$, i.e. 
		\begin{equation}\label{Lp}
			\|a(t) - a_\infty\|_{L^p(\Omega)} + \|b(t) - b_\infty\|_{L^p(\Omega)}  + \|c(t) - c_\infty\|_{L^p(\Omega)} \leq Ce^{-\lambda_p t} \quad \text{ for all } \quad t\geq 0,
		\end{equation}
		where $C, \lambda_p > 0$, and $(a_\infty, b_\infty, c_\infty)$ is the positive solution to 
		\begin{equation*}
			\begin{cases}
				-k_1a_\infty b_\infty^k  + k_2b_\infty c_\infty = 0,\\
				a_\infty + c_\infty = \frac{1}{|\Omega|}\left(\int_{\Omega}a_0dx + \int_{\Omega}c_0dx \right),\\
				b_\infty + c_\infty = \frac{1}{|\Omega|}\left(\int_{\Omega}b_0dx + \int_{\Omega}c_0dx \right).
			\end{cases}
		\end{equation*}
		\end{theorem}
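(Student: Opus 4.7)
The plan is to verify that system \eqref{S1} fits into the framework of Theorem \ref{thm2}, and then to invoke the entropy method of \cite{CJPT} for the convergence. As noted in the paragraph preceding the theorem, \eqref{S1} satisfies \eqref{A1}--\eqref{A5} with $h_1(a)=a$, $h_2(b)=b$, $h_3(c)=2c$, $K=0$, $r=2$, $\varrho=k+1$, and the lower triangular matrix $A$ displayed there. Since $n=2$ and $r=2$, Theorem \ref{thm2} yields existence of a unique nonnegative global strong solution together with the uniform-in-time $L^\infty$-bound \eqref{bound}. This is the only place where the spatial dimension enters restrictively.

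For the exponential convergence \eqref{Lp}, I would follow the entropy-entropy-dissipation (EED) strategy of \cite{CJPT}. Let $u_\infty=(a_\infty,b_\infty,c_\infty)$ be the positive equilibrium, determined uniquely by the two conservation laws $\int_\Omega(a+c)\,dx$ and $\int_\Omega(b+c)\,dx$ together with the detailed balance $k_1 a_\infty b_\infty^k = k_2 b_\infty c_\infty$. Define the relative entropy functional
\begin{equation*}
\mathcal{E}[u\,|\,u_\infty] = \sum_{u_i\in\{a,b,c\}} \int_\Omega \Bigl(u_i\log\tfrac{u_i}{u_{i,\infty}} - u_i + u_{i,\infty}\Bigr)dx,
\end{equation*}
and compute its time derivative along \eqref{S1}. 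One then aims to establish an EED inequality $-\tfrac{d}{dt}\mathcal{E}[u\,|\,u_\infty] \geq \lambda\,\mathcal{E}[u\,|\,u_\infty]$ with $\lambda>0$ depending only on $\Omega$, the rate constants, the initial masses and the lower bound $\delta$ on $b_0$. Grönwall's lemma then gives exponential decay of $\mathcal{E}$, and the Csisz\'ar--Kullback--Pinsker inequality converts this into $L^1$-convergence at the same rate.

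The upgrade from $L^1$ to general $L^p$ is immediate from the interpolation $\|u_i-u_{i,\infty}\|_{L^p(\Omega)}^p \leq \|u_i-u_{i,\infty}\|_{L^\infty(\Omega)}^{\,p-1}\|u_i-u_{i,\infty}\|_{L^1(\Omega)}$, where the first factor is uniformly bounded by \eqref{bound}. This gives \eqref{Lp} with $\lambda_p=\lambda/p$.

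The main obstacle in \cite{CJPT} for extending the result beyond $n=1$ was not the EED inequality itself, which is proved there by a dimension-independent argument exploiting the conservation laws and the uniform positivity of $b$ (which propagates from $b_0\geq\delta$ via a subsolution argument on the ODE $\partial_t b - d_b\Delta b \geq -k_1 ab^k$). The only dimensional restriction arose from the lack of a uniform-in-time $L^\infty$-bound in higher dimensions, which was needed both to close the EED estimate (some of its terms involve pointwise bounds on the concentrations) and to perform the $L^1\to L^p$ interpolation. Theorem \ref{thm2} supplies exactly this missing ingredient in $n=2$, so the rest of the argument of \cite{CJPT} transfers verbatim. Thus the proof reduces to invoking Theorem \ref{thm2} and then citing the EED and propagation-of-positivity arguments of \cite[Section 3--4]{CJPT}.
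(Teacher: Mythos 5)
Your proposal is correct and follows essentially the same route as the paper: invoke Theorem \ref{thm2} for global existence and the uniform-in-time $L^\infty$-bound, rely on the dimension-independent entropy--entropy-dissipation argument of \cite{CJPT} for exponential $L^1$-convergence, and upgrade to $L^p$ via interpolation with the uniform $L^\infty$-bound. The extra detail you give on the relative entropy, the Csisz\'ar--Kullback--Pinsker step and the propagation of positivity of $b$ is consistent with what the paper delegates to \cite{CJPT}.
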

		\begin{proof}
			The global existence and uniform boundedness follow directly from Theorem \ref{thm2}. The exponential equilibration in $L^1$-norm was shown in \cite{CJPT}, i.e.
			\begin{equation*}
						\|a(t) - a_\infty\|_{L^1(\Omega)} + \|b(t) - b_\infty\|_{L^1(\Omega)}  + \|c(t) - c_\infty\|_{L^1(\Omega)} \leq Ce^{-\lambda_1 t} \quad \text{ for all } \quad t\geq 0,
					\end{equation*}
			for some $C, \lambda_1 > 0$. The convergence in $L^p$-norm \eqref{Lp} then follows from the uniform $L^\infty$-bound \eqref{bound} and the interpolation inequality
		\begin{equation*}
			\|f\|_{L^p(\Omega)} \leq \|f\|_{L^\infty(\Omega)}^{\frac{p-1}{p}}\|f\|_{L^1(\Omega)}^{\frac 1p}.
		\end{equation*}			
		\end{proof}
	
	\begin{example}\label{ex2}
	\end{example}
	\vskip -0.1in
		We consider in this example the following reactions
		\begin{equation}\label{reaction2}
			A_1 + \cdots + A_m + kA_{m+1} \leftrightarrows A_m + A_{m+1} \leftrightarrows B_1 + \cdots + B_h + \ell A_m
		\end{equation}
		where $k, \ell \geq 2$. For simplicity, we assume that all the reaction rate constants are equal to one. Using the law of mass action, we obtain the following system
		\begin{equation}\label{S2}
		\partial_t\begin{bmatrix}
		u_1\\
		\vdots\\
		u_{m-1}\\
		u_m\\
		u_{m+1}\\
		v_1\\
		\vdots\\
		v_h
		\end{bmatrix}
		- D\Delta\begin{bmatrix}
				u_1\\
				\vdots\\
				u_{m-1}\\
				u_m\\
				u_{m+1}\\
				v_1\\
				\vdots\\
				v_h
				\end{bmatrix}
		= \begin{bmatrix}
			-u_1u_2\cdots u_mu_{m+1}^k + u_mu_{m+1}\\
			\vdots\\
			-u_1u_2\cdots u_mu_{m+1}^k + u_mu_{m+1}\\
			-(\ell-1)u_m^{\ell}v_1v_2\cdots v_h + (\ell-1)u_mu_{m+1}\\
			-(k-1)u_1\cdots u_mu_{m+1}^k + (k-2)u_mu_{m+1} + u_m^{\ell}v_1\cdots v_h\\
			u_mu_{m+1}-u_m^{\ell}v_1v_2\cdots v_h\\
			\vdots\\
			u_mu_{m+1}-u_m^{\ell}v_1v_2\cdots v_h
		\end{bmatrix}
		\end{equation}
		with homogeneous Neumann boundary conditions
		\begin{equation*}
			\nabla u_i \cdot \nu = \nabla v_j \cdot \nu = 0, \quad \text{  for } \quad (x,t)\in \partial\Omega\times(0,\infty)
		\end{equation*}
		and initial data
		\begin{equation*}
			u_i(x,0) = u_{i,0}(x), \qquad v_j(x,0) = v_{j,0}(x), \quad \text{ for } \quad x\in\Omega,
		\end{equation*}
		for all $i=1,\ldots, m$ and $j = 1,\ldots, h$. Here we denote by $u_i$ and $v_j$ the concentration densities of $A_i$ and $B_j$, respectively. System \eqref{S2} was left in \cite{CJPT} as an open problem, both for the global existence as well as the large time behavior.
		
		\medskip
		For simplicity, we write $u = (u_1, \ldots, u_m, u_{m+1}, v_1, \ldots, v_h)$ and $f_i(u)$, $i=1,\ldots, m+1$, the nonlinearity for the equation of $u_i$, and $f_{m+1+j}(u)$, $j=1,\ldots, h$, the nonlinearity for the equation of $v_j$. By straightforward calculations, one can show that there {\it do not} exist positive constants $\alpha_1, \ldots, \alpha_{m+1+h}$ such that
		\begin{equation*}
			\sum_{i=1}^{m+1+h}\alpha_if_i(u) \leq C\left(1 + \sum_{i=1}^{m+1}u_i + \sum_{j=1}^h v_j\right).
		\end{equation*}
		Therefore, the existing works using the mass control condition \eqref{mass_control} are not applicable. Luckily, due to the reversibility nature of \eqref{reaction2}, we have a dissipative structure of the entropy. More precisely, denote by $h_i(z) = z\log z - z + 1$, we can easily see that
		\begin{equation*}
		\begin{aligned}
			\sum_{i=1}^{m+1+h}h_i'(u_i)f_i(u) &= -[u_1u_2\cdots u_mu_{m+1}^k - u_mu_{m+1}]\log\frac{u_1u_2\cdots u_mu_{m+1}^k}{u_mu_{m+1}}\\
			&\quad - [u_mu_{m+1} - u_{m}^\ell v_1v_2\cdots v_h]\log\frac{u_mu_{m+1}}{u_{m}^\ell v_1v_2\cdots v_h} \leq 0,
		\end{aligned}
		\end{equation*}
		and thus \eqref{A3} is satisfied. With direct computations, we see that \eqref{A1}, \eqref{A2}, \eqref{A4} and \eqref{A5} are also satisfied with $r = 2$, $\varrho = \max\{m+k, \ell + h\}$ and the matrix 
		\begin{equation*}
			A = \begin{pmatrix}
			I_{m-1}&0&0&0\\
			0&1&0&0\\
			0&1&(\ell-1)&0\\
			0&0&\overrightarrow{1}^\top&I_h
			\end{pmatrix}
		\end{equation*}
		where $\overrightarrow{1}^\top$ is a column of size $h$ with all elements are one, and $I_{m-1}$ and $I_h$ are identities matrices of size $(m-1)$ and $h$, respectively. Therefore, we can apply Theorem \ref{thm2} to get global, uniform-in-time bounded solutions to \eqref{S2} in two dimensions.
		\begin{theorem}
			Let $n=2$. Then for any bounded, nonnegative initial data $u_{i0}, v_{j0}\in L^{\infty}(\Omega)$, \eqref{S2} has a unique, bounded strong solution with
			\begin{equation*}
				\sup_{t\geq 0}\sup_{i,j}\left[\|u_i(t)\|_{L^\infty(\Omega)} + \|v_j(t)\|_{L^\infty(\Omega)}\right] < +\infty.
			\end{equation*}
		\end{theorem}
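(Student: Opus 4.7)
The plan is to verify that the hypotheses of Theorem \ref{thm2} are met by the system \eqref{S2} with $r = 2$ and $K = 0$, so that global existence and the uniform $L^\infty$ bound follow directly from that theorem. Assumptions \eqref{A1} and \eqref{A2} are immediate: each nonlinearity in \eqref{S2} is polynomial in the concentrations, hence locally Lipschitz, and every negative monomial in the equation for a given species carries that species as a factor, so the corresponding $f_i$ is nonnegative whenever $u_i = 0$. Likewise \eqref{A5} is clear, since every $f_i$ is a polynomial of total degree at most $\varrho = \max\{m+k,\ell+h\}$.

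For \eqref{A3} with $K = 0$, I would take $h_i(z) = z\log z - z + 1$ for every species, which is nonnegative, continuous, convex, $C^2$ on $(0,\infty)$, and satisfies $h_i'(z) = \log z > 0$ at, say, $z = 2$. Denoting the mass-action rates of the two reversible reactions in \eqref{reaction2} by
\[
R_1^+ = u_1\cdots u_m u_{m+1}^k, \quad R_1^- = u_m u_{m+1}, \quad R_2^+ = u_m u_{m+1}, \quad R_2^- = u_m^{\ell}v_1\cdots v_h,
\]
the standard algebraic identity
\[
\sum_{i}h_i'(u_i)f_i(u) = -(R_1^+ - R_1^-)\log\frac{R_1^+}{R_1^-} - (R_2^+ - R_2^-)\log\frac{R_2^+}{R_2^-} \leq 0
\]
then follows from the elementary inequality $(x-y)(\log x - \log y) \geq 0$.

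The main (and essentially algebraic) work lies in verifying the intermediate sum condition \eqref{A4} with $r = 2$ for the matrix $A$ displayed above. This matrix is engineered precisely so that each partial row-sum loses its super-quadratic contributions: rows $1,\dots,m-1$ give $f_i \leq R_1^-$ since $-R_1^+ \leq 0$; row $m$ gives $f_m \leq (\ell-1)u_m u_{m+1}$ since $-(\ell-1)R_2^- \leq 0$; the row for $u_{m+1}$ combines $f_m + (\ell-1)f_{m+1}$, in which the $u_m^\ell v_1\cdots v_h$ terms cancel and what remains equals $(\ell-1)(k-1)(R_1^- - R_1^+) \leq (\ell-1)(k-1)u_m u_{m+1}$; and for each $j = 1,\dots,h$ the row for $v_j$ combines $f_{m+1} + f_{m+1+j}$, in which both the $u_m^\ell v_1\cdots v_h$ and $u_m u_{m+1}$ super-quadratic terms cancel and what remains equals $(k-1)(R_1^- - R_1^+) \leq (k-1)u_m u_{m+1}$. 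Since each final upper bound is at most $C(1+|u|)^2$, this establishes \eqref{A4} with $r = 2$ and a suitable constant $C$.

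With \eqref{A1}--\eqref{A5} verified, $r = 2$, $K = 0$ and $n = 2$, a direct appeal to Theorem \ref{thm2} yields the unique nonnegative global strong solution together with the claimed uniform-in-time $L^\infty$ bound, so no further machinery is needed.
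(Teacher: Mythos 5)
Your proposal is correct and follows essentially the same route as the paper: verify \eqref{A1}--\eqref{A5} for \eqref{S2} using the entropy functions $h_i(z)=z\log z-z+1$ for \eqref{A3} with $K=0$ and the same lower triangular matrix $A$ for the intermediate sum condition \eqref{A4} with $r=2$, then invoke Theorem \ref{thm2}. Your row-by-row cancellation check is in fact more explicit than the paper's ``direct computations,'' and the only cosmetic slip is calling $u_mu_{m+1}$ super-quadratic, which does not affect the argument.
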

		We believe this uniform bound will help to obtain convergence to equilibrium for \eqref{S2}. We leave this for future investigation.
	
	\begin{example}\label{ex3}
	\end{example}
	\vskip -0.1in
	In the last example, we consider the following reaction network
	\begin{equation*}
	\begin{tikzpicture} [baseline=(current  bounding  box.center)]
	\node (a) {$S_1+S_2$} node (b) at (2,0) {$3S_1$} node (c) at (2,-2) {$2S_1+S_3$} node (d) at (0,-2) {$2S_2$};
	
	\draw[arrows=->] ([xshift =0.5mm]a.east) -- ([xshift =-0.5mm]b.west);
	
	\draw[arrows=->] ([yshift=0.5mm]d.north) -- ([yshift=-0.5mm]a.south);
	
	\draw[arrows=->] ([yshift=-0.5mm]b.south) -- ([yshift =0.5mm]c.north);
	
	\draw[arrows=->] ([xshift =0.5mm]c.west) -- ([xshift =-0.5mm]d.east);
	\end{tikzpicture}
	\end{equation*}
	whose ODE setting was studied in \cite{And11}. Denote $u_i(x,t)$ as the concentration densities of $S_i$, we obtain the following system thanks to the law of mass action
	\begin{equation}\label{S3}
		\begin{cases}
			\partial_tu_1 - d_1\Delta u_1 = 2u_1u_2 - u_1^3 - 2u_1^2u_3 + u_2^2=: f_1(u), &(x,t)\in Q_T,\\
			\partial_t u_2 - d_2\Delta u_2 = -u_1u_2 + 2u_1^2u_3 - u_2^2=: f_2(u), &(x,t)\in Q_T,\\
			\partial_t u_3 - d_3\Delta u_3 = u_1^3 - u_1^2u_3=: f_3(u), &(x,t)\in Q_T
		\end{cases}
	\end{equation}	
	with homogeneous Neumann boundary conditions $\nabla u_i \cdot \nu = 0$ on $\partial\Omega\times(0,T)$ and nonnegative initial data $u_{i}(x,0) = u_{i,0}(x)$ on $\Omega$. We can again easily check that the assumptions \eqref{A1}, \eqref{A2}, \eqref{A4} and \eqref{A5} are satisfied with $r = 2$, $\varrho = 3$, and the matrix
	\begin{equation*}
		A = \begin{pmatrix}
			1&0&0\\
			1&1&0\\
			1&1&1
		\end{pmatrix}.
	\end{equation*}
	Similarly to Example \ref{ex2}, there {\it do not} exist positive constants $\alpha_i, i=1,\ldots, 3$ such that \eqref{mass_control} holds. To justify \eqref{A3}, we use again an entropic dissipative structure of \eqref{S3}. More precisely, by using $h_i(u_i) = u_i\log u_i - u_i + 1$ we can show with direct computations that
	\begin{equation*}
		\begin{aligned}
		\sum_{i=1}^3h_i'(u_i)f_i(u) = - \Phi(u_1u_2, u_1^3) - \Phi(u_1^3, u_1^2u_3) - \Phi(u_1^2u_3, u_2^2) - \Phi(u_2^2, u_1u_2)\leq 0
		\end{aligned}
	\end{equation*}
	where $\Phi(x,y) = x\log(x/y) - x + y$. Therefore, we have the following
	\begin{theorem}
			Let $n=2$. Then for any bounded, nonnegative initial data $u_{i0}\in L^{\infty}(\Omega)$, \eqref{S3} has a unique, bounded strong solution with
			\begin{equation*}
				\sup_{t\geq 0}\sup_{i=1,\ldots, 3}\|u_i(t)\|_{L^\infty(\Omega)} < +\infty.
			\end{equation*}
	\end{theorem}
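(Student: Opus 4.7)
The plan is to verify that system \eqref{S3} fits into the framework of Theorem \ref{thm2} with intermediate-sum exponent $r=2$, polynomial growth $\varrho=3$, entropy density $h_i(z)=z\log z - z + 1$, and constant $K=0$; then Theorem \ref{thm2} delivers the claim directly (the uniqueness half is already proved in its general form). Since $n=2$ and $K=0$, the same theorem will simultaneously give global existence and the uniform-in-time $L^\infty$-bound, so no separate argument is needed.

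Assumptions \eqref{A1} and \eqref{A5} are immediate from the polynomial form of the nonlinearities (with $\varrho=3$). For quasi-positivity \eqref{A2} I would simply evaluate $f_1|_{u_1=0}=u_2^2\geq 0$, $f_2|_{u_2=0}=2u_1^2u_3\geq 0$, and $f_3|_{u_3=0}=u_1^3\geq 0$. For the intermediate-sum condition \eqref{A4} with the lower-triangular matrix $A$ displayed in the excerpt, I would compute the three sums successively: the first row gives $f_1=2u_1u_2-u_1^3-2u_1^2u_3+u_2^2\leq 2u_1u_2+u_2^2$; the second row gives $f_1+f_2 = u_1u_2 - u_1^3 \leq u_1u_2$; and the third row gives $f_1+f_2+f_3 = u_1u_2-u_1^2u_3\leq u_1u_2$. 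Each of these is bounded above by $(1+u_1+u_2+u_3)^2$, so \eqref{A4} holds with $r=2$ and a suitable constant $C$.

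The only creative step is \eqref{A3}. With $h_i'(u_i)=\log u_i$, I would establish the algebraic identity
\begin{equation*}
\sum_{i=1}^3 h_i'(u_i)f_i(u) = -\Phi(u_1u_2,\,u_1^3) - \Phi(u_1^3,\,u_1^2u_3) - \Phi(u_1^2u_3,\,u_2^2) - \Phi(u_2^2,\,u_1u_2),
\end{equation*}
with $\Phi(x,y)=x\log(x/y)-x+y$, exactly as written in the excerpt. The verification is mechanical: expanding the right-hand side, the non-logarithmic contributions $u_1u_2-u_1^3$, $u_1^3-u_1^2u_3$, $u_1^2u_3-u_2^2$, $u_2^2-u_1u_2$ telescope to zero, and matching the coefficients of $\log u_1$, $\log u_2$, $\log u_3$ on both sides reproduces $f_1$, $f_2$, $f_3$. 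The choice of decomposition is natural because the four monomials $u_1u_2, u_1^3, u_1^2u_3, u_2^2$ are the reaction rates around the cycle $S_1+S_2\to 3S_1\to 2S_1+S_3\to 2S_2\to S_1+S_2$, and each $\Phi$-term encodes one directed arrow of that cycle. Nonpositivity of the sum then follows from the elementary inequality $\Phi(x,y)\geq 0$ for $x,y>0$, i.e.\ $x\log(x/y)\geq x-y$.

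With \eqref{A1}--\eqref{A5} verified and $K=0$, Theorem \ref{thm2} applies in $n=2$ and yields the unique, nonnegative, global strong solution together with the uniform bound $\sup_{t\geq 0}\sup_{i=1,\ldots,3}\|u_i(t)\|_{L^\infty(\Omega)}<+\infty$. The main obstacle is identifying the correct cyclic $\Phi$-decomposition for \eqref{A3}; once that identity is in hand, everything else reduces to a checklist and an invocation of Theorem \ref{thm2}.
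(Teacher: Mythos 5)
Your proposal is correct and follows essentially the same route as the paper: you verify \eqref{A1}, \eqref{A2}, \eqref{A4} (with the same lower-triangular matrix and $r=2$), \eqref{A5} with $\varrho=3$, and \eqref{A3} with $K=0$ via the same cyclic decomposition $\sum_{i=1}^3 f_i(u)\log u_i = -\Phi(u_1u_2,u_1^3)-\Phi(u_1^3,u_1^2u_3)-\Phi(u_1^2u_3,u_2^2)-\Phi(u_2^2,u_1u_2)\leq 0$, and then invoke Theorem \ref{thm2}. Your explicit intermediate-sum computations ($f_1+f_2=u_1u_2-u_1^3$, $f_1+f_2+f_3=u_1u_2-u_1^2u_3$) are exactly what the paper's choice of matrix encodes, so no gap remains.
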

	The convergence to equilibrium for the reaction-diffusion system \eqref{S3} is, up to our knowledge, completely open.
		
	\begin{example}\label{ex4}
	\end{example}
	\vskip -.1in
	The last example deals with the system \eqref{example} in the Introduction. This type of system was considered in \cite{Laa11} in which the following cases were treated:
	\begin{enumerate}
	\item $p + q < \ell$;
	\item ($d_1=d_3$ or $d_2=d_3$) and for any $(p,q,\ell)$;
	\item $d_1=d_2$ and for any $(p,q,\ell)$ such that $p+q \ne \ell$;
	\item $1<\ell < \frac{n+6}{n+2}$ and for any $(p,q)$.
	\end{enumerate}
	The results of our paper allow us to deal with the case $\ell = 2$, $n=2$ and arbitrary $(p,q)$ and arbitrary diffusion coefficients $d_i>0$, which is not included in \cite{Laa11}.
	\begin{theorem}
		Let $n=2$ and $\ell = 2$. Then for any bounded, non-negative initial data, \eqref{example} has a unique global strong solution, which is bounded uniformly in time, i.e.
		\begin{equation*}
			\sup_{i=1,\ldots, 3}\sup_{t\geq 0}\|u_i(t)\|_{L^\infty(\Omega)} <+\infty.
		\end{equation*}
	\end{theorem}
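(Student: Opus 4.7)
The plan is to verify that system \eqref{example} with $\ell = 2$ satisfies the hypotheses \eqref{A1}--\eqref{A5} with $r=2$ and $K=0$, and then invoke Theorem \ref{thm2} directly. Since $n=2$ falls in the range of that theorem, and no restriction on the diffusion coefficients or the exponents $p,q$ is imposed, this will deliver at once the unique global strong solution and its uniform-in-time $L^\infty$ bound.

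Because each $f_i$ is a polynomial, \eqref{A1} and \eqref{A5} (with $\varrho=\max\{p+q,\ell\}$) are immediate. For quasi-positivity \eqref{A2}: $f_1$ and $f_2$ reduce to $p u_3^\ell$ and $q u_3^\ell$ when $u_1=0$ or $u_2=0$ respectively, and $f_3$ reduces to $\ell u_1^{p} u_2^{q}$ when $u_3=0$; all are nonnegative on $[0,\infty)^3$. To obtain \eqref{A3} with $K=0$, I take the linear functions $h_1(s)=h_2(s)=\ell s$ and $h_3(s)=(p+q)s$; these are nonnegative, convex, continuous, $C^2$ on $(0,\infty)$, and have strictly positive derivatives everywhere. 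A direct expansion gives
\[
\sum_{i=1}^{3} h_i'(u_i) f_i(u) = \bigl(\ell p + \ell q - (p+q)\ell\bigr)\bigl(u_3^\ell - u_1^{p} u_2^{q}\bigr) = 0,
\]
which is exactly the mass conservation \eqref{mass_conservation} for the reversible reaction $pS_1 + qS_2 \leftrightarrows \ell S_3$.

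For \eqref{A4} I use the lower triangular matrix $A$ displayed right after \eqref{example}. Rows one and two satisfy
\[
a_{11} f_1 = p u_3^\ell - p u_1^{p} u_2^{q} \leq p u_3^\ell \leq C\bigl(1+|u|\bigr)^\ell,
\]
and analogously for row two, while row three evaluates to $q f_1 + p f_2 + (2pq/\ell) f_3 = 0$, trivially bounded. Hence \eqref{A4} holds with $r=\ell$, and specializing to $\ell=2$ gives $r=2$.

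All hypotheses of Theorem \ref{thm2} being satisfied (with $n=2$, $r=2$, $K=0$, and bounded nonnegative initial data), Theorem \ref{thm2} applies and immediately yields the unique nonnegative global strong solution together with the uniform-in-time $L^\infty$ bound claimed in the statement. There is no substantive obstacle in this example: the entire force of the conclusion is absorbed by Theorem \ref{thm2}, and the only work is the assumption-checking bookkeeping described above.
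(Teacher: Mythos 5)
Your proposal is correct and follows essentially the same route as the paper: verify \eqref{A1}--\eqref{A5} for \eqref{example} with the matrix displayed after \eqref{example} (giving $r=\ell=2$), check the conservation law so that $K=0$ in \eqref{A3}, and then apply Theorem \ref{thm2} with $n=2$. The paper treats this example exactly this way, so there is nothing to add.
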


	\begin{example}\label{ex5}
	\end{example}
	\vskip -0.1in
	
	Our last example considers the following system considered in \cite{FT18},
	\begin{equation}\label{example1}
		\begin{cases}
			\partial_t u_1 - d_1\Delta u_1 = u_3 - u_2^qu_1,\\
			\partial_t u_2 - d_2\Delta u_2 = qu_3 + u_2^q u_1 - (q+1)u_2^{q+1},\\
			\partial_t u_3 - d_3\Delta u_3 = -u_3 + u_2^{q+1},
		\end{cases}
	\end{equation}
	where $q\geq 1$, subject to homogeneous Neumann boundary conditions and bounded, non-negative initial data $u_i(x,0) = u_{i,0}(x)$ for $x\in\Omega$ and $i=1,\ldots, 3$. System \eqref{example1} models the weakly reversible reaction network $S_1 + qS_2 \to (q+1)S_2 \to S_3 \to S_1 + qS_2$. Similar to examples \ref{ex1}, \ref{ex2} and \ref{ex3}, system \eqref{example1} possesses besides a positive equilibrium a boundary equilibrium. The exponential convergence towards to positive equilibrium was shown in \cite{FT18} under the assumption that the strong solution exists globally. By applying Theorem \ref{thm1}, we show that this assumption can be removed. Indeed, it is easy to check that \eqref{A3} is satisfied with $K=0$, $h_1(u_1) = u_1$, $h_2(u_2) = u_2$, and $h_3(u_3) = (q+1)u_3$, while \eqref{A4} holds with
	\begin{equation*}
		A = \begin{pmatrix}
			1&0&0\\
			1&1&0\\
			1&1&1
		\end{pmatrix}
	\end{equation*}
	and $r = 1$. Since $r = 1$, it is remarked that \eqref{quasi-uniform} is always satisfied, and consequently the global existence in all dimensions follows from Theorem \ref{thm1}.
	\begin{theorem}
		For any non-negative, bounded initial data, \eqref{example1} has a unique, global strong solution which is bounded uniformly in time, i.e.
		\begin{equation}\label{uniform_Linfty}
			\sup_{i=1,\ldots, 3}\sup_{t\geq 0}\|u_i(t)\|_{L^\infty(\Omega)} < +\infty. 
		\end{equation}
		
		Moreover, if
		\begin{equation*}
			\left\|\frac{1}{u_{2,0}^q}\right\|_{L^\infty(\Omega)} < +\infty,
		\end{equation*}
		then the solution converges exponentially to the positive equilibrium in $L^p(\Omega)$ for all $1\leq p <\infty$, i.e. there exist $C, \lambda_p > 0$ such that
		\begin{equation*}
			\sum_{i=1}^3\|u_i(t) - u_{i,\infty}\|_{L^p(\Omega)} \leq Ce^{-\lambda_p t} \quad \text{for all } \quad t\geq 0.
		\end{equation*}
		Here $u_\infty = (u_{1,\infty}, u_{2,\infty}, u_{3,\infty})$ is the positive equilibrium which solves the following system
		\begin{equation}\label{convergence_Lp}
		\begin{cases}
			u_{3,\infty} - u_{2,\infty}^qu_{1,\infty} = 0,\\
			-u_{3,\infty} + u_{2,\infty}^{q+1} = 0,\\
			u_{1,\infty} + u_{2,\infty} + (q+1)u_{3,\infty} = \frac{1}{|\Omega|}\left(\int_{\Omega}u_{1,0}dx+\int_{\Omega}u_{2,0}dx+(q+1)\int_{\Omega}u_{1,0}dx\right).
		\end{cases}
		\end{equation}
	\end{theorem}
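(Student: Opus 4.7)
The plan is to split the proof into three steps: (i) verify the structural hypotheses \eqref{A1}--\eqref{A5}, (ii) invoke Theorem \ref{thm1} to obtain global existence together with the uniform-in-time bound \eqref{uniform_Linfty}, and (iii) combine the $L^1$-convergence result of \cite{FT18} with the uniform $L^\infty$-bound via interpolation to obtain the $L^p$-decay.

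For step (i), I would take $h_1(z)=h_2(z)=z$ and $h_3(z)=(q+1)z$. Conditions \eqref{A1} and \eqref{A5} are immediate since the $f_i$ are polynomials, and \eqref{A2} is checked directly: $f_1|_{u_1=0}=u_3\ge 0$, $f_2|_{u_2=0}=qu_3\ge 0$, $f_3|_{u_3=0}=u_2^{q+1}\ge 0$. For \eqref{A3} I would expand
\begin{equation*}
\sum_{i=1}^3 h_i'(u_i)f_i(u) = (u_3-u_2^q u_1)+(qu_3+u_2^q u_1-(q+1)u_2^{q+1})+(q+1)(-u_3+u_2^{q+1}),
\end{equation*}
which collapses identically to zero, so \eqref{A3} holds with $K=0$. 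For \eqref{A4} with the lower triangular matrix $A$ given in the example, the three row sums are $f_1=u_3-u_2^qu_1\leq u_3$, $f_1+f_2=(q+1)(u_3-u_2^{q+1})\leq(q+1)u_3$, and $f_1+f_2+f_3=q(u_3-u_2^{q+1})\leq q u_3$, each bounded above by $C(1+\sum_i u_i)$; hence \eqref{A4} holds with $r=1$.

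For step (ii), because $r=1$ the threshold $\tfrac{n+2}{2}(r-1)$ equals zero, so the existence of $p'>1$ with $p=p'/(p'-1)$ exceeding this threshold is automatic, and the quasi-uniform condition \eqref{quasi-uniform} is satisfied trivially for any diffusion coefficients $d_1,d_2,d_3>0$ in every dimension $n$. Theorem \ref{thm1} therefore delivers a unique global bounded strong solution, and since $K=0$ the uniform-in-time bound \eqref{uniform_Linfty} follows from the second assertion of that theorem.

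For step (iii), the global existence just secured removes the standing hypothesis of \cite{FT18}, so the entropy method developed there applies and, under $\|1/u_{2,0}^q\|_{L^\infty(\Omega)}<+\infty$, yields the $L^1$-exponential decay
\begin{equation*}
\sum_{i=1}^3 \|u_i(t)-u_{i,\infty}\|_{L^1(\Omega)} \leq C e^{-\lambda_1 t}, \qquad t\geq 0,
\end{equation*}
towards the equilibrium $u_\infty$ determined by \eqref{convergence_Lp}. The final $L^p$-decay then drops out of the interpolation
\begin{equation*}
\|f\|_{L^p(\Omega)}\leq \|f\|_{L^\infty(\Omega)}^{(p-1)/p}\|f\|_{L^1(\Omega)}^{1/p}
\end{equation*}
combined with \eqref{uniform_Linfty}, with resulting rate $\lambda_p=\lambda_1/p$. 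The only calculations requiring any care are the algebraic cancellations verifying \eqref{A3} and the three partial row bounds for \eqref{A4}; the remainder is a direct invocation of Theorem \ref{thm1} and a soft interpolation on top of the convergence statement in \cite{FT18}, so I do not anticipate a substantial obstacle.
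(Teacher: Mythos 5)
Your proposal is correct and follows essentially the same route as the paper: verify \eqref{A1}--\eqref{A5} with $h_1(z)=h_2(z)=z$, $h_3(z)=(q+1)z$, $K=0$ and the same lower-triangular matrix giving $r=1$, note that $r=1$ makes the hypothesis \eqref{quasi-uniform} of Theorem \ref{thm1} available (via the existence of an admissible $p'$, as in Lemma \ref{Lp>2}), and then upgrade the $L^1$-decay of \cite{FT18} to $L^p$-decay by interpolating with the uniform $L^\infty$-bound, exactly as in Example \ref{ex1}. Your explicit algebraic verifications of \eqref{A3} and the row sums for \eqref{A4} are the details the paper leaves to the reader, and they check out.
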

	\begin{proof}
		The global existence and uniform-in-time bound of strong solutions follow immediately from Theorem \ref{thm1} since $r=1$. The exponential convergence to equilibrium in $L^p$-norm \eqref{convergence_Lp} can be obtained similarly to example \ref{ex1} thanks to the following $L^1$-convergence, which was shown in \cite{FT18},
		\begin{equation*}
			\sum_{i=1}^3\|u_i(t) - u_{i,\infty}\|_{L^1(\Omega)} \leq Ce^{-\lambda_1 t} \quad \text{for all } \quad t\geq 0,
		\end{equation*}
		where $C, \lambda_1 > 0$.
	\end{proof}
		
	\medskip
	\par{\bf Acknowledgements:} The second author has been supported by the International Research Training Group IGDK 1754 "Optimization and Numerical Analysis for Partial Differential Equations with Nonsmooth Structures", funded by the German Research Council (DFG)  project number 188264188/GRK1754 and the Austrian Science Fund (FWF) under
	grant number W 1244-N18.

\end{document}